\newtheorem{theorem}{Theorem}
\theoremstyle{plain}
\newtheorem{lemma}{Lemma}
\newtheorem{proposition}{Proposition}
\newtheorem{remark}{Remark}
\numberwithin{equation}{section}
\begin{document}
\title[Existence of nodal solutions...]{\textbf{Nodal solutions for fourth order elliptic equations with critical
exponent on compact manifolds }}
\author{Mohamed Bekiri}
\address{Mohamed Bekiri Faculty Of Sciences, Mathematics Dept; University Abou-Bakr
Belka\^{\i}d; Tlemcen. Algeria}
\email{bekiri03@yahoo.fr}
\author{Mohammed Benalili }
\address{Mohammed Benalili Faculty Of Sciences, Mathematics Dept; University Abou-Bakr
Belka\^{\i}d; Tlemcen. Algeria}
\email{m\_benalili@mail.univ-tlemcen.dz}
\thanks{}
\date{October 22, 2016}
\subjclass[2000]{Primary 58J05}
\keywords{Fourth elliptic equation, Nodal solutions, Critical Sobolev exposent.}
\dedicatory{ }
\begin{abstract}
Using a variational method, we prove the existence of nodal solutions to
prescribed scalar $Q$- curvature type equations on compact Riemannian
manifolds with boundary. These equations are fourth-order elliptic equations
with critical Sobolev growth .

\end{abstract}
\maketitle

\section{Introduction}

The Paneitz operator was discovered by Paneitz on $4$-dimension manifolds (
see \cite{23}) and extended by Branson to higher dimensions $\left(
n\geq5\right)  $ ( see \cite{5}) is given by
\begin{equation}
P_{g}^{n}\left(  u\right)  =\Delta_{g}^{2}u-\operatorname{div}_{g}\left(
\frac{\left(  n-2\right)  ^{2}+4}{2\left(  n-1\right)  \left(  n-2\right)
}R_{g}g-\frac{4}{n-2}Ric_{g}\right)  ^{\#}du+\frac{n-4}{2}Q_{g}^{n}u
\label{eqn1}%
\end{equation}
where $\Delta_{g}u=-\operatorname{div}_{g}\left(  \nabla u\right)  $ is the
Laplace-Beltrami operator, $R_{g}$ and $Ric_{g}$ denote the scalar curvature
and Ricci curvature respectively, the symbol $\#$ stands for the musical
isomorphism and $Q_{g}^{n}$ is the $Q$-curvature which is expressed by%

\[
Q_{g}^{n}=\frac{1}{2\left(  n-1\right)  }\Delta_{g}R+\frac{n^{3}%
-4n^{2}+16n-16}{8\left(  n-1\right)  ^{2}\left(  n-2\right)  ^{2}}R^{2}%
-\frac{2}{\left(  n-2\right)  ^{2}}\left\vert Ric_{g}\right\vert ^{2}.
\]
Geometrically the $Q$-curvature can be interpreted as the analogue of the
scalar curvature for the conformal Laplacian.

Let $\left(  M,g\right)  $ be a smooth Riemannian compact manifold with
boundary and of dimension $\left(  n\geq5\right)  $. We let $A$ be a smooth
symmetric $\left(  2,0\right)  $-tensor on $M$ and $a\in C^{\infty}\left(
M\right)  .$ The Paneitz-Branson type operator with general coefficients is an
operator of the form%
\begin{equation}
P_{g}u=\Delta_{g}^{2}u-\operatorname{div}_{g}\left(  A\left(  \nabla u\right)
^{\#}\right)  +au. \label{eqn2}%
\end{equation}
If the coefficients are constant it reads as follows
\begin{equation}
P_{g}^{n}u=\Delta_{g}^{2}u+\alpha\Delta_{g}u+\beta u\text{.} \label{eqn3}%
\end{equation}
In particular on Einstein manifolds, it is reduced to%
\[
P_{g}^{n}\left(  u\right)  =\Delta_{g}^{2}u+\alpha_{n}\Delta_{g}u+\beta_{n}u
\]
where
\[
\alpha_{n}=\frac{n^{2}-2n-4}{2n\left(  n-1\right)  }R_{g}\text{ and }\beta
_{n}=\frac{\left(  n-4\right)  \left(  n^{2}-4\right)  }{16\left(  n-1\right)
^{2}}R_{g}^{2}\text{.}%
\]
A nice property of the Paneitz-Branson operator is its conformal covariance
i.e. if $\tilde{g}=\varphi^{\frac{4}{n-4}}g$ is a conformal metric to $g$
where $\varphi$ stands for a smooth positive function, then for all $u\in
C^{\infty}\left(  M\right)  $ we have%
\[
P_{g}^{n}\left(  u\varphi\right)  =\varphi^{\frac{n+4}{n-4}}P_{\tilde{g}}%
^{n}\left(  u\right)  \text{.}%
\]
In particular, by setting $u=1$, we obtain%
\[
P_{g}^{n}\varphi=\frac{n-4}{2}Q_{\tilde{g}}^{n}\varphi^{\frac{n+4}{n-4}%
}\text{.}%
\]

In this work seek the existence of a real number $\lambda$ and a nodal
solution $u$ of the following Dirichlet problem%
\begin{equation}
\left\{
\begin{array}
[c]{ll}%
P_{g}u=\lambda f\left\vert u\right\vert ^{2^{\sharp}-2}u & \text{in }M\\
u=\phi_{1}\text{, }\partial_{\nu}u=\phi_{2} & \text{on }\partial M
\end{array}
\right.  \label{eqn4}%
\end{equation}
where $P_{g}$ is the Paneitz-Branson type operator defined by $\left(
\text{\ref{eqn2}}\right)  $, $\phi_{1},\phi_{2}\in C^{\infty}\left(  \partial
M\right)  $ are boundary data and $2^{\sharp}=\frac{2n}{n-4}$ is the Sobolev
critical exponent. When $\phi_{1}$\ changes sign, $u$ is called a nodal
solution of the equation $\left(  \text{\ref{eqn4}}\right)  $.

Many authors have paid attention to the study of nodal solutions to the
second-order Dirichlet problem and several works were carried out; we quote
some of them :

In 1990, F.V. Atkinson, H. Brezis and L.A. Peletier \cite{1}, studied the
existence of nodal solutions of the following eigenvalue problem
\[
\left\{
\begin{array}
[c]{ll}%
-\Delta u=\lambda u+\left\vert u\right\vert ^{p}u & \text{in }B\\
u\not \equiv 0 & \text{in }B\\
u=0 & \text{on }\partial B
\end{array}
\right.
\]
where $B$ is the unit ball of $%
%TCIMACRO{\U{211d} }%
%BeginExpansion
\mathbb{R}
%EndExpansion
^{n}$ $\left(  n\geq3\right)  $, $p=\frac{n+2}{n-2},$ and $\lambda$ is a
positive real number.

In 1994, E. Hebey and M. Vaugon \cite{18} obtained existence and multiplicity
results of nodal solutions to the problem%
\[
\left\{
\begin{array}
[c]{ll}%
-\Delta_{g}u+au=f\left\vert u\right\vert ^{\frac{4}{n-2}}u & \text{in }%
\Omega\\
u\not \equiv 0 & \text{in }\Omega\\
u=0 & \text{on }\partial\Omega
\end{array}
\right.
\]
where $\Omega$ is a smooth bounded domain of $R^{n}$ $\left(  n\geq3\right)
$, $a$, $f\in C^{\infty}\left(  \bar{\Omega}\right)  $ and $g$ is a Riemannian
metric defined in a neighborhood of $\bar{\Omega}.$ Moreover, they considered
this problem when the zero data on the boundary is replaced by a non-zero one.

In \cite{19} D. Holcman obtained by variational method nodal solutions to the
following problem%
\[
\left\{
\begin{array}
[c]{ll}%
\Delta_{g}u+au=\lambda f\left\vert u\right\vert ^{\frac{4}{n-2}}u & \text{in
}M\\
u=\phi & \text{on }\partial M
\end{array}
\right.
\]
where $\left(  M,g\right)  $ is a smooth compact Riemannian manifold with
boundary and of dimension $n\geq3$, $a$, $f\in C^{\infty}\left(  M\right)  $,
$\phi\in C^{\infty}\left(  \partial M\right)  $ is a changing sign function
and where $\lambda$ stands for a real positive number.

In 2002, Z. Djadli and A. Jourdain \cite{11} studied nodal solutions for
scalar curvature type equations with perturbations.

With the discovery of the Paneitz invariant and consequently the introduction
of the Q-curvature studies similar to those conducted for the scalar curvature
were undertaken by several people. The latter consists in the search for
solutions to fourth-order elliptic equations with critical exponents in the
sense of the Sobolev inclusions. We refer the reader to the works by Benalili
\cite{4}, Chang \cite{8}, Chang-Yang \cite{9}, Djadli-Hebey-Ledoux \cite{10},
Djadli-Malchiodi-Ould Ahmedou \cite{12}, \cite{13}, Esposito-Robert \cite{15},
Felli \cite{16}, Hebey-Robert \cite{17}, Lin \cite{21}, Robert \cite{24},
\cite{25}, to quote only few of them.

We denote by $H_{2,0}^{2}\left(  M\right)  $ the standard Sobolev space which
is the completion of the space $C_{c}^{\infty}\left(  M\right)  $ with respect
to the norm
\[
\left\Vert u\right\Vert _{H_{2}^{2}\left(  M\right)  }^{2}=\sum\nolimits_{i=0}%
^{i=2}\left\Vert \nabla^{i}u\right\Vert _{2}^{2}%
\]
where $C_{c}^{\infty}\left(  M\right)  $ denotes the space of smooth functions
with compact supports in $M$.

This standard is equivalent to the following Hilbert norm%
\[
\left\Vert u\right\Vert ^{2}=\left\Vert \Delta u\right\Vert _{2}%
^{2}+\left\Vert \nabla u\right\Vert _{2}^{2}+\left\Vert u\right\Vert _{2}%
^{2}.
\]
We know from the Sobolev theorem that the inclusion $H_{2,0}^{2}\left(
M\right)  $ is continuously embedded in $L^{2^{\sharp}}\left(  M\right)  $,
but it is not compactly embedded.\newline Let $K_{0}$ be the best constant in
the Euclidean Sobolev inequality:%
\[
\left\Vert u\right\Vert _{2^{\sharp}}^{2}\leq K_{0}\left\Vert \Delta
u\right\Vert _{2}^{2}%
\]
for all $u\in C_{c}^{\infty}\left(  R\right)  $. The value of $K_{0}$ was
computed by Edmunds-Fortunato-Janelli \cite{14}, Lieb \cite{20} and Lions
\cite{22}. They obtained that%
\[
\frac{1}{K_{0}}=\frac{n\left(  n^{2}-4\right)  \left(  n-4\right)  \omega
_{n}^{\frac{4}{n}}}{16}%
\]
where $\omega_{n}$ denotes the volume of the Euclidean unit sphere $\left(
\mathbb{S}^{n},h\right)  $, endowed with its standard metric.

Following standard terminology, we say that the Paneitz-Branson type operator
$P_{g}=\Delta_{g}^{2}-\operatorname{div}_{g}\left(  A\left(  \nabla.\right)
^{\#}\right)  +a$ is coercive if there exists $\Lambda>0,$ such that for any
$u\in H_{2,0}^{2}\left(  M\right)  $%
\[
\int\nolimits_{M}uP_{g}udv_{g}\geq\Lambda\left\Vert u\right\Vert _{H_{2,0}%
^{2}\left(  M\right)  }^{2}\text{.}%
\]
In this paper we extend some results obtained by D. Holcman \cite{19} for
equations of the Yamabe type to the equations containing the operator of the
Paneitz-Branson type. The main results of our study are summarized in the
following theorem which is the version with boundary of Theorem 3 of
Episoto-Robert \cite{15}.

\begin{theorem}
\label{th1} Let $\left(  M,g\right)  $ be a compact Riemannian manifold of
dimension $n\geq6$ with boundary $\partial M{\neq\emptyset}$. Let $A$ be a
smooth symmetric $\left(  2,0\right)  $-tensor and $a,$ $f\in C^{\infty
}\left(  M\right)  $, $f>0$ and $x_{0}\in M$ such that $f\left(  x_{0}\right)
=\max_{M}f.$ We assume that the operator $P_{g}=\Delta_{g}^{2}%
-\operatorname{div}_{g}\left(  A\left(  \nabla.\right)  ^{\#}\right)  +a$ is
coercive and%
\[
8\left(  n-1\right)  Tr_{g}A\left(  x_{0}\right)  +\left(  n-6\right)  \left(
n-4\right)  \left(  n+2\right)  \frac{\Delta f\left(  x_{0}\right)  }{f\left(
x_{0}\right)  }-4\left(  n^{2}-2n-4\right)  R_{g}\left(  x_{0}\right)
<0\text{.}%
\]
Then there exists a positive real number $\lambda$ and a nontrivial solution
$w=u-h\in H_{2,0}^{2}\left(  M\right)  \cap C^{4}\left(  M\right)  $ of the
equation $\left(  \text{\ref{eqn4}}\right)  $ which is a minimizer of the
functional $I$ defined on $H_{2,0}^{2}\left(  M\right)  $ by $I\left(
u\right)  =\int\nolimits_{M}\left(  \left(  \Delta_{g}u\right)  ^{2}+A\left(
\left(  \nabla u\right)  ^{\#},\left(  \nabla u\right)  ^{\#}\right)
+au^{2}\right)  dv_{g}$ under the constraint $\int_{M}f\left\vert
u+h\right\vert ^{2^{\sharp}}dv_{g}=\gamma$, where $h$ denotes the unique
solution of the problem%
\[
\left\{
\begin{array}
[c]{ll}%
\Delta_{g}^{2}h-\operatorname{div}_{g}\left(  A\left(  \nabla h\right)
^{\#}\right)  +ah=0 & \text{in }M\\
h=\phi_{1}\text{, }\partial_{\nu}h=\phi_{2} & \text{on }\partial M
\end{array}
\right.
\]
and where $\phi_{1}$, $\phi_{2}$ are smooth functions on the boundary
$\partial M$ with $\phi_{1}$ is of changing sign.
\end{theorem}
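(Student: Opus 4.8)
The plan is to reformulate the Dirichlet problem as a variational problem on the Hilbert space $H_{2,0}^2(M)$ by substituting $u = w + h$, where $h$ is the biharmonic extension solving the homogeneous equation with the prescribed boundary data; this reduces the problem to finding $w\in H_{2,0}^2(M)$ satisfying $P_g(w+h)=\lambda f|w+h|^{2^{\sharp}-2}(w+h)$ with zero Cauchy data. One then minimizes the quadratic functional $I(u)=\int_M\big((\Delta_g u)^2+A((\nabla u)^{\#},(\nabla u)^{\#})+au^2\big)\,dv_g$ over the constraint manifold $\Sigma_\gamma=\{u\in H_{2,0}^2(M):\int_M f|u+h|^{2^{\sharp}}\,dv_g=\gamma\}$; by coercivity, $I$ is equivalent to $\|\cdot\|^2$ on $H_{2,0}^2(M)$, so it is bounded below and weakly lower semicontinuous. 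Setting $\mu_\gamma=\inf_{\Sigma_\gamma}I$, a minimizer — if it exists and is nonzero, which is guaranteed because $\phi_1$ changes sign forces $h$ (hence $w+h$) to change sign — yields, via the Lagrange multiplier rule, a weak solution with $\lambda=2\mu_\gamma/(2^{\sharp}\gamma)>0$; elliptic regularity for the fourth-order operator then upgrades $w\in C^4(M)$.

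The heart of the argument, and the main obstacle, is recovering compactness: the embedding $H_{2,0}^2(M)\hookrightarrow L^{2^{\sharp}}(M)$ is not compact, so a minimizing sequence $(u_k)$ for $\mu_\gamma$ may only converge weakly, and the constraint can fail in the limit due to concentration. The strategy is to establish a strict sub-critical threshold: one shows that if
\[
\mu_\gamma < \frac{1}{K_0}\,\gamma^{2/2^{\sharp}}\,\big(\max_M f\big)^{-2/2^{\sharp}},
\]
then concentration is impossible and the minimizing sequence converges strongly in $L^{2^{\sharp}}$ (after translation in the constraint is not needed here since $\Sigma_\gamma$ is defined with a fixed $h$), so the infimum is attained. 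This is proven by a concentration-compactness / P.-L. Lions-type dichotomy argument adapted to the fourth-order setting: decompose $u_k = u + (u_k-u)$ with $u$ the weak limit, use the Brezis–Lieb lemma to split both $I(u_k)$ and the $L^{2^{\sharp}}$-norm, apply the sharp Euclidean Sobolev inequality with best constant $K_0$ to the concentrating part, and derive a contradiction with the strict inequality unless the concentrating part vanishes.

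It remains to verify the strict inequality, and this is where the dimensional condition
\[
8(n-1)\,\mathrm{Tr}_g A(x_0) + (n-6)(n-4)(n+2)\,\frac{\Delta f(x_0)}{f(x_0)} - 4(n^2-2n-4)\,R_g(x_0) < 0
\]
enters. The plan is to build an explicit test function by truncating and rescaling the extremal functions for the Euclidean inequality — the standard bubbles $U_\varepsilon(x)=c_n\big(\varepsilon/(\varepsilon^2+|x|^2)\big)^{(n-4)/2}$ — centered at the maximum point $x_0$ of $f$, using geodesic normal coordinates and a cutoff. Plugging $u_\varepsilon$ (suitably normalized so the constraint holds, i.e. scaled to land on $\Sigma_\gamma$) into $I$ and expanding in powers of $\varepsilon$, the leading term reproduces $K_0^{-1}\gamma^{2/2^{\sharp}}(\max_M f)^{-2/2^{\sharp}}$, and the next-order term — which for $n\geq 6$ is of order $\varepsilon^2$ (with a logarithmic factor in the borderline case) — carries precisely the combination of $\mathrm{Tr}_g A(x_0)$, $\Delta f(x_0)/f(x_0)$ and $R_g(x_0)$ above; the sign hypothesis makes this correction negative, so $\mu_\gamma$ lies strictly below the threshold for $\varepsilon$ small. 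The delicate points are the careful Riemannian expansion of $\int_M(\Delta_g u_\varepsilon)^2$, $\int_M A((\nabla u_\varepsilon)^{\#},(\nabla u_\varepsilon)^{\#})$ and $\int_M f|u_\varepsilon|^{2^{\sharp}}$ (using the metric expansion $g_{ij}=\delta_{ij}-\tfrac13 R_{ikjl}x^kx^l+\cdots$ and the Taylor expansion of $f$ around $x_0$), bookkeeping the various integrals of powers of $U_\varepsilon$ over $\mathbb{R}^n$, and checking that the error from the cutoff is negligible; the coercivity of $P_g$ ensures the lower-order $au^2$ and first-order terms do not spoil the sign of the $\varepsilon^2$ coefficient, and the condition $n\geq 6$ guarantees the relevant integrals converge and the expansion order is as claimed.
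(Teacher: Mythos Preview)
Your overall plan is sound and reaches the same threshold inequality and the same test-function expansion as the paper, but the route differs in one structural respect. The paper does \emph{not} minimize $I$ directly on the critical constraint $\Sigma_\gamma$ and then invoke a concentration-compactness/Brezis--Lieb dichotomy; instead it follows Yamabe's subcritical approximation. For each $q\in(2,2^{\sharp})$ it minimizes $I$ on $\mathcal{H}_q=\{w\in H_{2,0}^2(M):\int_M f|w+h|^q\,dv_g=\gamma\}$, where the embedding $H_{2,0}^2\hookrightarrow L^q$ is compact, so a minimizer $w_{\gamma,q}$ with multiplier $\lambda_{\gamma,q}>0$ exists for free. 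It then proves uniform bounds on $(w_{\gamma,q})_q$ and $(\lambda_{\gamma,q})_q$ as $q\to 2^{\sharp}$ (using $\gamma>\int_M f|h|^q$ and an implicit-function argument on a test element $t_q\psi_1\in\mathcal{H}_q$) and passes to a weak limit $w$ solving the critical equation. The strict threshold $\mu<\gamma^{2/2^{\sharp}}/(K_0\|f\|_\infty^{2/2^{\sharp}})$ is then used only to rule out $w\equiv 0$, via the Sobolev inequality of Lemma~1 rather than a Lions-type splitting. Your direct approach buys a cleaner statement that the limit actually minimizes on $\Sigma_\gamma$; the paper's buys that compactness is never an issue along the way. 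Two small inaccuracies in your write-up: the formula $\lambda=2\mu_\gamma/(2^{\sharp}\gamma)$ is wrong because the constraint involves $u+h$ and is not homogeneous in $u$ --- testing the Euler--Lagrange relation with $\varphi=u$ yields instead a denominator $\gamma-\int_M f|u+h|^{2^{\sharp}-2}(u+h)h\,dv_g$, whose positivity comes from H\"older and the choice $\gamma>\int_M f|h|^{2^{\sharp}}$; and the reason the minimizer $w$ is nonzero is precisely that choice of $\gamma$ (so $0\notin\Sigma_\gamma$), not the sign change of $\phi_1$, which is used only \emph{a posteriori} to conclude that $u=w+h$ is nodal.
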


\begin{remark}
Since the function $\phi_{1}$ is of changing sign, the solutions obtained in
Theorem \ref{th1} are nodal.
\end{remark}

The proof of theorem \ref{th1} proceeds in several steps: in a first section
we use the approach developed by H. Yamabe in \cite{27}: we construct a
minimizing sequence of solutions to the subcritical equations. In the second
one, we show that this sequence converges weakly to a solution of the critical
equation when the subcritical exponent tends to the critical Sobolev exponent.
In the third section we show that under some conditions, we obtain a non
trivial solution of the critical equation. The last section is devoted to test
functions which verify the conditions assumed in the generic Proposition
\ref{prop1}.

\section{Subcritical solutions}

We shall make use of the following Sobolev inequality on compact manifolds
with boundary obtained in a more general context (see \cite{6})

\begin{lemma}
\label{lem1} (\cite{6}) Let ($M,g$) be a smooth compact Riemannian manifold
with boundary of dimension $n\geq5.$ Then given $\varepsilon>0$, then there
exists a positive constant $B$ which depends on $M$, $g$, $\varepsilon$ such
that%
\[
\left\Vert u\right\Vert _{2^{\sharp}}^{2}\leq\left(  K_{0}^{2}+\varepsilon
\right)  \left\Vert \Delta u\right\Vert _{2}^{2}+B\left\Vert u\right\Vert
_{2}^{2}%
\]
for all $u\in H_{2,0}^{2}\left(  M\right)  $.
\end{lemma}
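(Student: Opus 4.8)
The plan is to reduce the inequality to the Euclidean one, $\|v\|_{2^\sharp}^2\le K_0\|\Delta v\|_2^2$ for $v\in C_c^\infty(\mathbb{R}^n)$, by a partition-of-unity argument; by density it suffices to prove it for $u\in C_c^\infty(M)$. First I would fix a small parameter $\delta>0$ (to be chosen at the very end in terms of $\varepsilon$) and, using compactness of $M$, cover $M$ by finitely many coordinate charts $(\Omega_j,\psi_j)_{j=1}^N$ — geodesic-type charts in the interior and half-space charts near $\partial M$ — in each of which the pushed-forward metric is $\delta$-close in $C^1$ to the Euclidean metric $\xi$; this is always possible by taking the charts small enough. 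Choose a smooth partition of unity $\{\eta_j\}$ subordinate to this cover, normalized so that $\sum_j\eta_j^2\equiv 1$ (hence also $\sum_j\eta_j\nabla\eta_j\equiv 0$). Since $u$ is compactly supported away from $\partial M$, each localized function $\eta_j u$, transplanted to $\mathbb{R}^n$ via $\psi_j$, is a function $v_j\in C_c^\infty(\mathbb{R}^n)$, so the Euclidean inequality applies to it.

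Next I would localize the $L^{2^\sharp}$-norm: because $2^\sharp/2=n/(n-4)>1$, Minkowski's inequality in $L^{2^\sharp/2}(M)$ gives $\|u\|_{2^\sharp}^2=\|\sum_j(\eta_j u)^2\|_{L^{2^\sharp/2}}\le\sum_j\|\eta_j u\|_{2^\sharp}^2$. For each $j$, applying the Euclidean inequality to $v_j$ and transferring back to $(M,g)$ costs only factors $1+c\delta$ in the volume element and in the principal part of $\Delta_g$; the remaining error $\Delta_g-\Delta_\xi$ applied to $v_j$ involves $\nabla^2 v_j$ and $\nabla v_j$ with $O(\delta)$ coefficients, and the Hessian term is reabsorbed using the identity $\|\nabla^2 v_j\|_{L^2(\mathbb{R}^n)}=\|\Delta_\xi v_j\|_{L^2(\mathbb{R}^n)}$, valid for compactly supported functions. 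This yields $\|\eta_j u\|_{2^\sharp}^2\le K_0(1+c\delta)\|\Delta_g(\eta_j u)\|_2^2+c\delta\|\nabla(\eta_j u)\|_2^2$ with $c$ an absolute constant. Expanding $\Delta_g(\eta_j u)=\eta_j\Delta_g u-2\langle\nabla\eta_j,\nabla u\rangle+u\Delta_g\eta_j$ and summing over $j$, the leading term recombines to $\sum_j\|\eta_j\Delta_g u\|_2^2=\|\Delta_g u\|_2^2$ by $\sum_j\eta_j^2=1$, the cross term between $\eta_j\Delta_g u$ and $\langle\nabla\eta_j,\nabla u\rangle$ telescopes to $0$ because $\sum_j\eta_j\nabla\eta_j=0$, and every remaining term is bounded by $C(\|\nabla u\|_2^2+\|u\|_2^2)$ (or by $\mu\|\Delta_g u\|_2^2+C_\mu\|u\|_2^2$ after one Young inequality), with $C$ depending only on the chosen cover. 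Finally I would use $\|\nabla u\|_2^2=\int_M u\,\Delta_g u\,dv_g\le\mu\|\Delta_g u\|_2^2+\tfrac{1}{4\mu}\|u\|_2^2$ (valid on $H_{2,0}^2(M)$) to swallow every $\|\nabla u\|_2^2$, arriving at $\|u\|_{2^\sharp}^2\le(K_0+c\delta+C(\delta)\mu)\|\Delta_g u\|_2^2+B(\delta,\mu)\|u\|_2^2$.

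To finish I would choose the parameters in order: given $\varepsilon>0$, first take $\delta$ with $c\delta<\varepsilon/2$ — which freezes the cover, the partition $\{\eta_j\}$, and the constant $C(\delta)$ — then take $\mu$ with $C(\delta)\mu<\varepsilon/2$, and set $B:=B(\delta,\mu)$; this gives $\|u\|_{2^\sharp}^2\le(K_0+\varepsilon)\|\Delta_g u\|_2^2+B\|u\|_2^2$ for $u\in C_c^\infty(M)$, hence for all $u\in H_{2,0}^2(M)$ by density.

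The delicate point — and the step I expect to cause the most trouble — is exactly this interplay of scales: forcing the metric to be nearly Euclidean in each chart makes the charts shrink and the partition-of-unity derivatives $\|\nabla\eta_j\|_\infty,\|\Delta_g\eta_j\|_\infty$ blow up as $\delta\to0$, so the estimate must be organized so that these large constants only ever multiply the subleading quantities $\|\nabla u\|_2^2$ and $\|u\|_2^2$ — absorbed afterwards by interpolation at the independently chosen small scale $\mu$ — and never touch the coefficient of $\|\Delta_g u\|_2^2$, which must stay $K_0(1+o(1))$. An alternative that avoids the chart bookkeeping: extend $u\in H_{2,0}^2(M)$ by zero across $\partial M$ to the doubled closed manifold $\widehat M=M\cup_{\partial M}M$, equipped with a smooth metric coinciding with $g$ on $M$; since $u$ and $\partial_\nu u$ vanish on $\partial M$ the extension lies in $H_2^2(\widehat M)$ with the same $L^{2^\sharp}$-, $\|\Delta\cdot\|_2$- and $L^2$-norms as $u$ over $M$, so the lemma follows at once from the corresponding (standard) $\varepsilon$-Sobolev inequality on the closed manifold $\widehat M$.
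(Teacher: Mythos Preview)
The paper does not give its own proof of this lemma: it is simply quoted from Biezuner--Montenegro \cite{6}, so there is nothing in the paper to compare your argument against. Your partition-of-unity reduction to the sharp Euclidean inequality, with the lower-order terms absorbed via the interpolation $\|\nabla u\|_2^2\le\mu\|\Delta u\|_2^2+C_\mu\|u\|_2^2$, is exactly the standard route (and is in fact how \cite{6} proceeds); the organization of scales you describe---freezing the cover first, then choosing $\mu$---is correct, and your observation that for $u\in C_c^\infty(M)$ the boundary charts are never actually touched is what makes the half-space issue harmless. Two small remarks. First, the lemma as printed has $K_0^2+\varepsilon$, but the paper's own definition of $K_0$ and its later use of the lemma in Proposition~\ref{prop2} both have $K_0+\varepsilon$; you have (rightly) proved the latter, which is what is meant. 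Second, your alternative doubling argument needs a word of care: the metric on the double $\widehat M$ is in general only Lipschitz across $\partial M$, not smooth, so one cannot directly invoke ``the standard $\varepsilon$-Sobolev inequality on the closed manifold $\widehat M$''. The clean fix is the one already implicit in your main argument: work with $u\in C_c^\infty(M)$, whose support stays away from $\partial M$, so the metric can be smoothed near $\partial M$ without affecting any of the integrals, and then pass to $H_{2,0}^2(M)$ by density.
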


First, we state the following useful proposition.

\begin{proposition}
Let $\left(  M,g\right)  $ be a smooth Riemannian compact manifold with smooth
boundary and of dimension $n\geq6.$ We assume that the operator $P_{g}$
defined in $\left(  \text{\ref{eqn2}}\right)  $ is coercive. Then there exists
a unique $h\in C^{4}\left(  M\right)  $ solution of the following problem
\begin{equation}
\left\{
\begin{array}
[c]{ll}%
P_{g}h=0 & \text{in }M\\
h=\phi_{1}\text{, }\partial_{\nu}h=\phi_{2} & \text{on }\partial M
\end{array}
\right.  . \label{eqn5}%
\end{equation}

\end{proposition}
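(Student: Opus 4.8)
The plan is to reduce to the homogeneous Dirichlet problem and then apply the Lax–Milgram theorem in the right function space. First I would pick a smooth extension $\Phi \in C^\infty(M)$ of the boundary data, i.e. a smooth function on $M$ with $\Phi = \phi_1$ and $\partial_\nu \Phi = \phi_2$ on $\partial M$; such an extension exists by the standard collar-neighborhood construction since $\phi_1,\phi_2 \in C^\infty(\partial M)$. Writing $h = \Phi + v$, the problem $(\ref{eqn5})$ becomes $P_g v = -P_g\Phi$ in $M$ with $v = 0$ and $\partial_\nu v = 0$ on $\partial M$; note $P_g\Phi \in C^\infty(M) \subset L^2(M)$. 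So it suffices to find $v \in H^2_{2,0}(M)$ solving this inhomogeneous equation weakly and then establish regularity.

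Next I would set up the variational framework. Define the bilinear form
\[
B(v,w) = \int_M \left( \Delta_g v\, \Delta_g w + A\bigl((\nabla v)^\#,(\nabla w)^\#\bigr) + a\, v w \right) dv_g
\]
on $H^2_{2,0}(M) \times H^2_{2,0}(M)$. Integration by parts shows $B(v,w) = \int_M w\, P_g v\, dv_g$ for smooth $v$, so $B$ is the Dirichlet form of $P_g$. Coercivity of $P_g$ gives $B(v,v) \geq \Lambda \|v\|^2_{H^2_{2,0}(M)}$, and continuity of $B$ follows because $A$ and $a$ are smooth hence bounded on the compact $M$ (the gradient term is controlled by $\|\nabla v\|_2 \|\nabla w\|_2$ and the zeroth order term by $\|v\|_2\|w\|_2$). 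The linear functional $w \mapsto -\int_M w\, (P_g\Phi)\, dv_g$ is bounded on $H^2_{2,0}(M)$ since $P_g\Phi \in L^2(M)$ and $H^2_{2,0}(M) \hookrightarrow L^2(M)$ continuously. By the Lax–Milgram theorem there is a unique $v \in H^2_{2,0}(M)$ with $B(v,w) = -\int_M w\,(P_g\Phi)\, dv_g$ for all $w \in H^2_{2,0}(M)$; this is precisely the weak formulation of the boundary value problem for $v$. Uniqueness of $v$ in $H^2_{2,0}(M)$, and hence of $h = \Phi + v$ among solutions with the prescribed Cauchy data on $\partial M$, is immediate from coercivity: any difference of two solutions lies in $H^2_{2,0}(M)$ and is $B$-orthogonal to the whole space, forcing it to vanish.

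The remaining point, and the one requiring the most care, is elliptic regularity: upgrading the weak solution $h \in H^2_{2,0}(M)$ to $h \in C^4(M)$. Since $P_g$ is a fourth-order uniformly elliptic operator with smooth coefficients and the right-hand side in $(\ref{eqn5})$ is $0$ (and $\phi_1,\phi_2$ are smooth), standard interior $L^p$ and Schauder estimates for higher-order elliptic equations give $h \in C^\infty$ in the interior of $M$, while the $C^4$ regularity up to the boundary uses boundary elliptic estimates for the Dirichlet system $\{P_g, \ \mathrm{id}|_{\partial M}, \ \partial_\nu|_{\partial M}\}$, which is elliptic in the sense of Agmon–Douglis–Nirenberg and satisfies the complementing (Lopatinskii–Shapiro) condition. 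A bootstrap argument starting from $h \in H^2(M)$ then yields $h \in H^4_{\mathrm{loc}}$ and, iterating together with Sobolev embedding (valid since $n \geq 6$), $h \in C^4(M)$; in fact $h \in C^\infty(M)$. This regularity step is routine but technical, and it is the part I would expect to need the most bookkeeping; everything else is a direct application of Lax–Milgram using the coercivity hypothesis.
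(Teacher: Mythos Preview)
Your proposal is correct and follows exactly the same approach as the paper: Lax--Milgram for existence and uniqueness (exploiting the coercivity hypothesis), followed by general elliptic regularity theory for the $C^{4}$ conclusion. The paper's proof is in fact just two sentences invoking these ingredients, so your write-up is considerably more detailed than what appears there; in particular, the reduction via a smooth extension $\Phi$ and the explicit verification of the Lax--Milgram hypotheses are left implicit in the paper.
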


\begin{proof}
Since the operator $P_{g}$ is coercive, the existence and the uniqueness of
the solution are guaranteed by the Lax-Milgram's theorem. The regularity
follows from general regularity theory.
\end{proof}

Let $w=u-h$, we first notice that $u\in C^{4}\left(  M\right)  $ is solution
of the equation $\left(  \text{\ref{eqn4}}\right)  $ if and only if $w\in
C^{4}\left(  M\right)  $ is solution of
\begin{equation}
\left\{
\begin{array}
[c]{ll}%
P_{g}w=\lambda f\left\vert w+h\right\vert ^{2^{\sharp}-2}\left(  w+h\right)  &
\text{in }M\\
w=\partial_{\nu}w=0 & \text{on }\partial M
\end{array}
\right.  . \label{eqn6}%
\end{equation}
The associated subcritical problem to \ref{eqn6} is then
\begin{equation}
\left\{
\begin{array}
[c]{ll}%
P_{g}w=\lambda f\left\vert w+h\right\vert ^{q-2}\left(  w+h\right)  & \text{in
}M\\
w=\partial_{\nu}w=0 & \text{on }\partial M
\end{array}
\right.  \label{eqn7}%
\end{equation}
where $2<q<2^{\sharp}$.\newline The functional associated to equation $\left(
\text{\ref{eqn7}}\right)  $ is defined on $H_{2,0}^{2}\left(  M\right)  $ by
\[
I\left(  w\right)  =\int\nolimits_{M}wP_{g}wdv_{g}=\int\nolimits_{M}\left(
\left(  \Delta w\right)  ^{2}+A\left(  \left(  \nabla w\right)  ^{\#},\left(
\nabla w\right)  ^{\#}\right)  +aw^{2}\right)  dv_{g}\text{.}%
\]
Denote by%
\[
\mu_{\gamma,q}:=\inf_{w\in\mathcal{H}_{q}}I\left(  w\right)  \text{ and
\ }\mathcal{H}_{q}=\left\{
\begin{array}
[c]{cc}%
w\in H_{2,0}^{2}\left(  M\right)  \text{ such that} & \int\nolimits_{M}%
f\left\vert w+h\right\vert ^{q}dv_{g}=\gamma
\end{array}
\right\}
\]
where $\gamma$ is a constant such that
\begin{equation}
\int\nolimits_{M}f\left\vert h\right\vert ^{2^{\sharp}}dv_{g}<\gamma\text{.}
\label{eqn8}%
\end{equation}
We state that

\begin{lemma}
For all $\gamma>\int\nolimits_{M}f\left\vert h\right\vert ^{2^{\sharp}}dv_{g}$
and for all $\ 0\leq q<2^{\sharp}$, there exists a real number $\lambda
_{\gamma,q}$ and a function $w_{\gamma,q}\in\mathcal{H}_{q}$ solution of the
problem $\left(  \text{\ref{eqn7}}\right)  $.
\end{lemma}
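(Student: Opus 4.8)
The plan is to apply the direct method of the calculus of variations to $I$ restricted to $\mathcal{H}_q$, and then to read off equation $(\ref{eqn7})$ from a Lagrange multiplier. The subcriticality $q<2^{\sharp}$ will be used only through compactness of Sobolev embeddings, so no genuinely hard step arises here.

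First I would check that $\mathcal{H}_q$ is nonempty. Since $\gamma>\int_M f|h|^{2^{\sharp}}dv_g\geq 0$ we have $\gamma>0$; choosing $\psi_j\in C_c^{\infty}(M)\subset H_{2,0}^{2}(M)$ with $\psi_j\to -h$ in $L^q(M)$ gives $\int_M f|h+\psi_j|^{q}dv_g\to 0$, while for a fixed nonzero $\psi\in C_c^{\infty}(M)$ one has $\int_M f|h+t\psi|^{q}dv_g\to +\infty$ as $t\to+\infty$; as $w\mapsto\int_M f|w+h|^{q}dv_g$ is continuous on the connected space $H_{2,0}^{2}(M)$, its range contains $\gamma$. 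Next, coercivity of $P_g$ gives $I(w)=\int_M wP_gw\,dv_g\geq\Lambda\|w\|_{H_{2,0}^{2}(M)}^{2}\geq 0$ for all $w\in H_{2,0}^{2}(M)$, so $\mu_{\gamma,q}=\inf_{\mathcal{H}_q}I$ is a finite nonnegative number and any minimizing sequence $(w_k)\subset\mathcal{H}_q$ is bounded in $H_{2,0}^{2}(M)$. Passing to a subsequence, $w_k\rightharpoonup w_{\gamma,q}$ weakly in $H_{2,0}^{2}(M)$.

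The crucial point is that $q<2^{\sharp}$: the embeddings $H_{2,0}^{2}(M)\hookrightarrow L^q(M)$ and $H_{2,0}^{2}(M)\hookrightarrow H^{1}(M)$ are then compact, so along the subsequence $w_k\to w_{\gamma,q}$ strongly in $L^q(M)$ and $\nabla w_k\to\nabla w_{\gamma,q}$, $w_k\to w_{\gamma,q}$ strongly in $L^2(M)$. Strong $L^q$-convergence of $w_k+h$ yields $\int_M f|w_k+h|^{q}dv_g\to\int_M f|w_{\gamma,q}+h|^{q}dv_g$, hence $w_{\gamma,q}\in\mathcal{H}_q$. Strong $L^2$-convergence gives $\int_M A((\nabla w_k)^{\#},(\nabla w_k)^{\#})dv_g\to\int_M A((\nabla w_{\gamma,q})^{\#},(\nabla w_{\gamma,q})^{\#})dv_g$ and $\int_M aw_k^{2}dv_g\to\int_M aw_{\gamma,q}^{2}dv_g$, while $\liminf_k\|\Delta w_k\|_2^{2}\geq\|\Delta w_{\gamma,q}\|_2^{2}$ by weak lower semicontinuity of the $L^2$-norm. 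Adding these, $I(w_{\gamma,q})\leq\liminf_k I(w_k)=\mu_{\gamma,q}$, and since $w_{\gamma,q}\in\mathcal{H}_q$ we conclude $I(w_{\gamma,q})=\mu_{\gamma,q}$, so $w_{\gamma,q}$ is a minimizer.

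Finally I would extract the equation via the Lagrange multiplier rule. Both $I$ and $G(w):=\int_M f|w+h|^{q}dv_g$ are of class $C^{1}$ on $H_{2,0}^{2}(M)$ (for $G$ one uses $2<q<2^{\sharp}$ as in $(\ref{eqn7})$ together with the continuous embedding $H_{2,0}^{2}(M)\hookrightarrow L^{2^{\sharp}}(M)$), with $G'(w)[\varphi]=q\int_M f|w+h|^{q-2}(w+h)\varphi\,dv_g$ and $I'(w)[\varphi]=2\int_M\left(\Delta w\,\Delta\varphi+A((\nabla w)^{\#},(\nabla\varphi)^{\#})+aw\varphi\right)dv_g$. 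Since $f>0$ and $\gamma=\int_M f|w_{\gamma,q}+h|^{q}dv_g>0$, the function $w_{\gamma,q}+h$ is not identically zero, so $G'(w_{\gamma,q})\neq 0$ and $\mathcal{H}_q$ is a $C^{1}$ submanifold near $w_{\gamma,q}$; hence there is $\mu\in\mathbb{R}$ with $I'(w_{\gamma,q})=\mu\,G'(w_{\gamma,q})$. Setting $\lambda_{\gamma,q}:=\mu q/2$, this identity is precisely the weak formulation of $P_gw_{\gamma,q}=\lambda_{\gamma,q}f|w_{\gamma,q}+h|^{q-2}(w_{\gamma,q}+h)$ in $M$, and $w_{\gamma,q}=\partial_{\nu}w_{\gamma,q}=0$ on $\partial M$ because $w_{\gamma,q}\in H_{2,0}^{2}(M)$; thus $w_{\gamma,q}$ solves $(\ref{eqn7})$, elliptic regularity upgrading it to $C^{4}(M)$ as in the preceding proposition. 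No real obstacle arises beyond assembling these standard ingredients; the single decisive point is the compactness used in the third paragraph, which breaks down exactly when $q=2^{\sharp}$ and necessitates the finer analysis of the subsequent sections.
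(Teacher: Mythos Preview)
Your proof is correct and follows the same overall approach as the paper: the direct method on $\mathcal{H}_q$ using compact Sobolev embeddings for $q<2^{\sharp}$, followed by the Lagrange multiplier rule and an elliptic bootstrap. Your argument is in fact slightly cleaner in places---you invoke the coercivity of $P_g$ directly to bound minimizing sequences (the paper instead reaches the same conclusion via interpolation of $\|\nabla\cdot\|_2$ and the constraint), and you explicitly check $G'(w_{\gamma,q})\neq 0$ before applying the multiplier rule, a point the paper leaves implicit.
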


\begin{proof}
First we show that $\mathcal{H}_{q}$ is not empty under the condition
$\int\nolimits_{M}f\left\vert h\right\vert ^{2^{\sharp}}dv_{g}<\gamma$. To do
this, we set
\[
F\left(  t\right)  =\int\nolimits_{M}f\left\vert t\psi_{1}+h\right\vert
^{q}dv_{g}%
\]
where $\psi_{1}$ is the eigenfunction corresponding to the first eigenualue
$\lambda_{1}$ of $\Delta_{g}^{2}$ i.e.%
\[
\left\{
\begin{array}
[c]{ll}%
\Delta_{g}^{2}\psi_{1}=\lambda_{1}\psi_{1} & \text{in }M\\
\psi_{1}=\partial_{\nu}\psi_{1}=0 & \text{on }\partial M
\end{array}
\right.  \text{.}%
\]
It is clear that
\[
F\left(  0\right)  =\int\nolimits_{M}f\left\vert h\right\vert ^{q}%
dv_{g}<\gamma\text{ }%
\]
and%
\[
\text{ }\lim\limits_{t\rightarrow+\infty}F\left(  t\right)  =+\infty\text{.}%
\]
Prom the continuity of $F$, there exists $t_{q}>0$ such that
\[
F\left(  t_{q}\right)  =\int\nolimits_{M}f\left\vert t_{q}\psi_{1}%
+h\right\vert ^{q}dv_{g}=\gamma\text{.}%
\]
So $t_{q}\psi_{1}\in\mathcal{H}_{q}$.

Secondly we check that $\mu_{\gamma,q}$ is finite. Let $u\in H_{2,0}%
^{2}\left(  M\right)  $. Since the tensor $A$ is smooth, there exists a
constant $C>0$ such that%
\begin{equation}
\left\vert \int\nolimits_{M}A\left(  \left(  \nabla u\right)  ^{\#},\left(
\nabla u\right)  ^{\#}\right)  dv_{g}\right\vert \leq C\int\nolimits_{M}%
\left\vert \nabla u\right\vert ^{2}dv_{g}\text{.} \label{eqn9}%
\end{equation}

By interpolation $\left(  \text{see Aubin \cite{3} page 93}\right)  ,$ for any
$\eta>0$ there exists \newline$C\left(  \eta\right)  >0$ such that for any
$u\in H_{2,0}^{2}\left(  M\right)  $%
\begin{equation}
\left\Vert \nabla u\right\Vert _{2}^{2}\leq\eta\left\Vert \Delta u\right\Vert
_{2}^{2}+C\left(  \eta\right)  \left\Vert u\right\Vert _{2}^{2}\text{.}
\label{eqn10}%
\end{equation}

Plugging $\left(  \text{\ref{eqn10}}\right)  $ into $\left(  \text{\ref{eqn9}%
}\right)  $ we get
\begin{equation}
\left\vert \int\nolimits_{M}A\left(  \left(  \nabla u\right)  ^{\#},\left(
\nabla u\right)  ^{\#}\right)  dv_{g}\right\vert \leq C\left(  \eta\right)
\left\Vert \Delta u\right\Vert _{2}^{2}+C^{\prime}\left(  \eta\right)
\left\Vert u\right\Vert _{2}^{2}\text{.} \label{eqn 10'}%
\end{equation}

Considering inequality (\ref{eqn 10'}) and the expression of $I$, we deduce
that
\begin{equation}
I\left(  u\right)  \geq\left(  1-C\left(  \eta\right)  \right)  \left\Vert
\Delta u\right\Vert _{2}^{2}-\left(  C^{\prime}\left(  \eta\right)
+\left\Vert a\right\Vert _{\infty}\right)  \left\Vert u\right\Vert _{2}^{2}
\label{eqn11}%
\end{equation}

where $\left\Vert .\right\Vert _{\infty}$ is the supremum norm.

On the other hand, for all $u\in\mathcal{H}_{q}$, by H\"{o}lder's inequality
we have%
\begin{equation}
\left\Vert u\right\Vert _{2}^{2}\leq Vol_{g}\left(  M\right)  ^{1-\frac{2}{q}%
}\left(  \left(  \min_{M}f\right)  ^{-\frac{1}{q}}\gamma^{\frac{1}{q}%
}+\left\Vert h\right\Vert _{q}\right)  ^{2}\text{.} \label{eqn12}%
\end{equation}

Plugging $\left(  \ref{eqn12}\right)  $ in $\left(  \ref{eqn11}\right)  $ and
taking $\eta$ small enough, we get%
\[
1-C\left(  \eta\right)  =C_{1}>0\text{.}%
\]
So%
\begin{equation}
I\left(  u\right)  \geq C_{1}\left\Vert \Delta u\right\Vert _{2}^{2}+C_{2}
\label{eqn13}%
\end{equation}
where $C_{1}$ some positive constant and $C_{2}$ is a constant independent of
$u$.

Let $\left(  w_{i}\right)  $ be a minimizing sequence of the functional $I$ on
$\mathcal{H}_{q}$. Hence for $i$ sufficiently large $I\left(  w_{i}\right)
\leq\mu_{\gamma,q}+1$ and by $\left(  \ref{eqn13}\right)  $ we obtain%
\begin{equation}
\left\Vert \Delta w_{i}\right\Vert _{2}^{2}\leq\frac{1}{C_{1}}\left(
\mu_{\gamma,q}+1-C_{2}\right)  \text{.} \label{13'}%
\end{equation}

From $\left(  \ref{eqn10}\right)  $ and $\left(  \ref{eqn12}\right)  $ we
deduce that $\left\Vert \nabla w_{i}\right\Vert _{2}^{2}$ and $\left\Vert
w_{i}\right\Vert _{2}^{2}$ are bounded, and by (\ref{13'}) the sequence
$\left(  w_{i}\right)  $ is bounded in $H_{2,0}^{2}\left(  M\right)  $. By the
reflexivity of the latter space, there exists subsequence of $\left(
w_{i}\right)  $ still denoted $\left(  w_{i}\right)  $ such that \newline%
\[%
\begin{array}
[c]{ll}%
\left(  a\right)  & w_{i}\rightharpoonup w_{\gamma,q}\text{ weakly in }%
H_{2,0}^{2}\left(  M\right)  \medskip\\
\left(  b\right)  & w_{i}\rightarrow w_{\gamma,q}\text{ strongly in }H_{1}%
^{2}\left(  M\right)  \text{ and }L^{s}\left(  M\right)  \text{ for all
}s<2^{\sharp}\medskip\\
\left(  c\right)  & \left\Vert w_{\gamma,q}\right\Vert _{H_{2,0}^{2}}\leq
\lim\limits_{i}\inf\left\Vert w_{i}\right\Vert _{H_{2,0}^{2}}%
\end{array}
\]
\newline Consequently%
\[%
\begin{array}
[c]{ll}%
I\left(  w_{\gamma,q}\right)  & =\int\nolimits_{M}\left(  \left(  \Delta
w_{\gamma,q}\right)  ^{2}+A\left(  \left(  \nabla w_{\gamma,q}\right)
^{\#},\left(  \nabla w_{\gamma,q}\right)  ^{\#}\right)  +aw_{\gamma,q}%
^{2}\right)  dv_{g}\medskip\\
& \leq\lim_{i}\inf\left\Vert \Delta w_{i}\right\Vert _{2}^{2}+\lim_{i}%
\int\nolimits_{M}A\left(  \left(  \nabla w_{i}\right)  ^{\#},\left(  \nabla
w_{i}\right)  ^{\#}\right)  dv_{g}+\lim_{i}\int\nolimits_{M}aw_{i}^{2}%
dv_{g}\medskip\\
& =\lim_{i}I\left(  w_{i}\right)  =\mu_{\gamma,q}%
\end{array}
\]
\newline Since%
\[
\int\nolimits_{M}f\left\vert w_{\gamma,q}+h\right\vert ^{q}dv_{g}%
=\lim\limits_{i}\int\nolimits_{M}f\left\vert w_{i}+h\right\vert ^{q}%
dv_{g}=\gamma
\]
we obtain%
\[
I\left(  w_{\gamma,q}\right)  =\mu_{\gamma,q}%
\]
so $w_{\gamma,q}$ satisfies%
\[%
\begin{array}
[c]{l}%
\int\nolimits_{M}\left(  \Delta w_{\gamma,q}\Delta\varphi+A\left(  \left(
\nabla w_{\gamma,q}\right)  ^{\#},\left(  \nabla\varphi\right)  ^{\#}\right)
+aw_{\gamma,q}\varphi\right)  dv_{g}\\
=\lambda_{\gamma,q}\int\nolimits_{M}f\left\vert w_{\gamma,q}+h\right\vert
^{q-2}\left(  w_{\gamma,q}+h\right)  \varphi dv_{g}%
\end{array}
\]
for any $\varphi\in H_{2,0}^{2}\left(  M\right)  ;$ where $\lambda_{\gamma,q}$
is the Lagrange multiplier . Hence $w_{\gamma,q}$ is a weak solution of the
equation%
\begin{equation}
\left\{
\begin{array}
[c]{ll}%
\Delta_{g}^{2}w_{\gamma,q}-\operatorname{div}A\left(  \nabla w_{\gamma
,q}\right)  ^{\#}+aw_{\gamma,q}=\lambda_{\gamma,q}f\left\vert w_{\gamma
,q}+h\right\vert ^{q-2}\left(  w_{\gamma,q}+h\right)  & \text{in }M\\
w_{\gamma,q}=\partial_{\nu}w_{\gamma,q}=0 & \text{on }\partial M
\end{array}
\right.  \label{eqn14}%
\end{equation}

Using the boostrap method, we show that $w_{\gamma,q}$ $\in L^{\infty}\left(
M\right)  ,$ so $P_{g}\left(  w_{\gamma,q}\right)  =\Delta_{g}^{2}w_{\gamma
,q}-\operatorname{div}A\left(  \nabla w_{\gamma,q}\right)  ^{\#}+aw_{\gamma
,q}\in L^{s}\left(  M\right)  $ for any $\left(  s<2^{\#}\right)  .$ Since
$P_{g}$ is a fourth order elliptic operator, it follows by a well known
regularity theorem that $w_{\gamma,q}\in C^{0,\alpha}\left(  M\right)  ,$ for
some $\alpha\in\left(  0,1\right)  $; hence $w_{\gamma,q}\in C^{4,\alpha
}\left(  M\right)  $.
\end{proof}

\section{Critical solutions}

In this section we will show the exitence of a non trivial solution of the
critical equation (\ref{eqn4}).

\begin{proposition}
Under the hypothesis $\int\nolimits_{M}f\left\vert h\right\vert ^{q}%
dv_{g}<\gamma$, the sequence $\left(  w_{\gamma,q}\right)  _{q}$ is bounded in
$H_{2,0}^{2}\left(  M\right)  $. The Lagrange multipliers $\lambda_{\gamma,q}$
are strictly positive and the sequence $(\lambda_{\gamma,q})_{q}$ is bounded
when $q$ tends to $2^{\sharp}$.
\end{proposition}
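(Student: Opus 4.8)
The plan is to exploit the minimizing property of $w_{\gamma,q}$ together with the coercivity of $P_g$ to get a uniform bound, and then to test the Euler–Lagrange equation against a convenient function to control $\lambda_{\gamma,q}$ from both sides.

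First I would bound $I(w_{\gamma,q})$ from above uniformly in $q$. Pick a fixed competitor: since $\gamma > \int_M f|h|^{2^{\sharp}}dv_g$, we also have $\gamma > \int_M f|h|^{q}dv_g$ for $q$ close to $2^{\sharp}$ (by continuity of $q\mapsto \int_M f|h|^q dv_g$ and, say, shrinking the interval; one may instead use $\|h\|_q\le \mathrm{Vol}_g(M)^{\text{something}}\|h\|_{2^\sharp}$-type estimates), so the function $t_q\psi_1$ constructed in the previous lemma lies in $\mathcal H_q$. I would check that $t_q$ stays bounded as $q\to 2^{\sharp}$: from $\int_M f|t_q\psi_1+h|^q dv_g=\gamma$ and $f>0$, $\min_M f>0$, one gets an upper bound on $\|t_q\psi_1+h\|_q$, hence on $t_q$ (using that $\psi_1\not\equiv 0$). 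Therefore $I(t_q\psi_1)=t_q^2 I(\psi_1)$ is bounded, which gives $\mu_{\gamma,q}=I(w_{\gamma,q})\le C$ uniformly. Now invoke coercivity: $I(w_{\gamma,q})=\int_M w_{\gamma,q}P_g w_{\gamma,q}\,dv_g\ge \Lambda\|w_{\gamma,q}\|_{H^2_{2,0}(M)}^2$, so $\|w_{\gamma,q}\|_{H^2_{2,0}(M)}^2\le C/\Lambda$ uniformly in $q$. That settles the boundedness claim.

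Next, positivity of $\lambda_{\gamma,q}$. Testing the weak form of (\ref{eqn14}) with $\varphi=w_{\gamma,q}$ gives
\[
I(w_{\gamma,q})=\lambda_{\gamma,q}\int_M f|w_{\gamma,q}+h|^{q-2}(w_{\gamma,q}+h)w_{\gamma,q}\,dv_g .
\]
The left side is $\ge \Lambda\|w_{\gamma,q}\|^2_{H^2_{2,0}(M)}\ge 0$, and it is strictly positive unless $w_{\gamma,q}\equiv 0$; but $w_{\gamma,q}\equiv 0$ is incompatible with $w_{\gamma,q}\in\mathcal H_q$ when $\int_M f|h|^q dv_g<\gamma$. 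So $I(w_{\gamma,q})>0$. To conclude $\lambda_{\gamma,q}>0$ I need the sign of the integral on the right; the clean way is to test instead with $\varphi=w_{\gamma,q}+h$ — but $h$ is not in $H^2_{2,0}(M)$. Instead I would test the \emph{strong} equation $P_g w_{\gamma,q}=\lambda_{\gamma,q} f|w_{\gamma,q}+h|^{q-2}(w_{\gamma,q}+h)$ by multiplying by $w_{\gamma,q}+h$ and integrating, using $P_g h=0$ and the boundary conditions $w_{\gamma,q}=\partial_\nu w_{\gamma,q}=0$ to integrate by parts: the boundary terms vanish, and one obtains
\[
I(w_{\gamma,q}) \;=\; \lambda_{\gamma,q}\int_M f|w_{\gamma,q}+h|^{q}\,dv_g \;=\; \lambda_{\gamma,q}\,\gamma .
\]
Since $I(w_{\gamma,q})>0$ and $\gamma>0$, this forces $\lambda_{\gamma,q}>0$, and moreover $\lambda_{\gamma,q}=I(w_{\gamma,q})/\gamma\le C/\gamma$, which is exactly the uniform bound on $(\lambda_{\gamma,q})_q$ as $q\to 2^{\sharp}$.

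The main obstacle I anticipate is the justification of the integration by parts with the non-zero boundary data, i.e. showing rigorously that $\int_M (P_g w_{\gamma,q})\,h\,dv_g=\int_M w_{\gamma,q}(P_g h)\,dv_g=0$; this needs the self-adjointness of $\Delta_g^2-\operatorname{div}_g(A(\nabla\cdot)^{\#})+a$ together with the vanishing of $w_{\gamma,q}$ and $\partial_\nu w_{\gamma,q}$ on $\partial M$, which kills all the boundary contributions in Green's formula (the surviving terms involve $w_{\gamma,q}$ and $\partial_\nu w_{\gamma,q}$ on $\partial M$). A secondary technical point is the uniform bound on $t_q$; here I would be careful that $\|h\|_q\to\|h\|_{2^\sharp}$ and $\|t_q\psi_1+h\|_q^q=\gamma/\text{(weighted)}$ together pin down $t_q$ in a fixed compact set, using $f\ge \min_M f>0$ and that $\psi_1$ is a fixed nonzero function. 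Everything else — reflexivity, weak lower semicontinuity, Hölder — is routine and already used in the previous lemma.
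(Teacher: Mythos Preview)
Your argument for the boundedness of $(w_{\gamma,q})_q$ follows the same line as the paper (a fixed competitor $t_q\psi_1$ plus coercivity); your direct $L^q$ estimate on $t_q$ is a reasonable alternative to the paper's implicit-function-theorem argument for the continuity of $q\mapsto t_q$.

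The genuine gap is in your treatment of $\lambda_{\gamma,q}$. The identity $I(w_{\gamma,q})=\lambda_{\gamma,q}\gamma$ is \emph{not} correct, because the integration by parts you invoke does not kill all boundary terms. When you move the derivatives from $P_g w_{\gamma,q}$ onto $h$, Green's formula for the biharmonic part gives
\[
\int_M (\Delta_g^2 w_{\gamma,q})\,h\,dv_g
=\int_M \Delta_g w_{\gamma,q}\,\Delta_g h\,dv_g
+\int_{\partial M}\bigl(\ \cdots\ \Delta_g w_{\gamma,q},\ \partial_\nu\Delta_g w_{\gamma,q}\ \cdots\ \bigr)\,d\sigma_g,
\]
and the surviving boundary terms involve $\Delta_g w_{\gamma,q}$ and $\partial_\nu\Delta_g w_{\gamma,q}$ on $\partial M$ (paired with $h=\phi_1$ and $\partial_\nu h=\phi_2$), \emph{not} $w_{\gamma,q}$ and $\partial_\nu w_{\gamma,q}$. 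There is no reason for these to vanish, so $\int_M (P_g w_{\gamma,q})\,h\,dv_g\neq \int_M w_{\gamma,q}\,(P_g h)\,dv_g$ in general, and your formula $\lambda_{\gamma,q}=I(w_{\gamma,q})/\gamma$ collapses. You correctly flagged this step as the main obstacle, but your proposed resolution is based on a false claim about which boundary terms survive.

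The paper sidesteps the boundary entirely by keeping the legitimate test function $\varphi=w_{\gamma,q}\in H^2_{2,0}(M)$ and instead splitting the \emph{right-hand side}: writing $w_{\gamma,q}=(w_{\gamma,q}+h)-h$ gives
\[
\mu_{\gamma,q}=\lambda_{\gamma,q}\Bigl(\gamma-\int_M f|w_{\gamma,q}+h|^{q-2}(w_{\gamma,q}+h)\,h\,dv_g\Bigr),
\]
and H\"older with weight $f$ yields
\[
\int_M f|w_{\gamma,q}+h|^{q-2}(w_{\gamma,q}+h)\,h\,dv_g
\le \gamma^{1-\frac{1}{q}}\Bigl(\int_M f|h|^q\,dv_g\Bigr)^{\frac{1}{q}}<\gamma,
\]
so the bracket is strictly positive and bounded below uniformly as $q\to 2^\sharp$. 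Combined with $\mu_{\gamma,q}>0$ (which you already argued via coercivity and $w_{\gamma,q}\not\equiv 0$) and $\mu_{\gamma,q}\le C(\gamma)$, this gives both $\lambda_{\gamma,q}>0$ and the uniform upper bound on $(\lambda_{\gamma,q})_q$.
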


\begin{proof}
Multiplying $\left(  \ref{eqn14}\right)  $ by $w_{\gamma,q}$ and integrating
yield%
\[%
\begin{array}
[c]{ll}%
0\leq\mu_{\gamma,q} & =\int\nolimits_{M}\left(  P_{g}w_{\gamma,q}\right)
w_{\gamma,q}dv_{g}\medskip\\
& =\lambda_{\gamma,q}\int\nolimits_{M}f\left\vert w_{\gamma,q}+h\right\vert
^{q-2}\left(  w_{\gamma,q}+h\right)  w_{\gamma,q}dv_{g}\medskip\\
& =\lambda_{\gamma,q}\int\nolimits_{M}f\left\vert w_{\gamma,q}+h\right\vert
^{q-2}\left(  w_{\gamma,q}+h\right)  \left(  w_{\gamma,q}+h-h\right)
dv_{g}\medskip\\
& =\lambda_{\gamma,q}\left(  \gamma-\int\nolimits_{M}f\left\vert w_{\gamma
,q}+h\right\vert ^{q-2}\left(  w_{\gamma,q}+h\right)  hdv_{g}\right)
\end{array}
\]
By H\"{o}lder's inequality we get%
\[
\int\nolimits_{M}f\left\vert w_{\gamma,q}+h\right\vert ^{q-2}\left(
w_{\gamma,q}+h\right)  hdv_{g}\leq\gamma^{1-\frac{1}{q}}\left(  \int
\nolimits_{M}f\left\vert h\right\vert ^{q}\right)  ^{\frac{1}{q}}<\gamma.
\]
We deduce that $\lambda_{\gamma,q}\geq0$. Moreover if $\lambda_{\gamma,q}=0$,
then $w_{\gamma,q}$ $=0$. Hence a contradiction with the fact that
$w_{\gamma,q}\in\mathcal{H}_{q}$ and $\int\nolimits_{M}f\left\vert
h\right\vert ^{q}dv_{g}<\gamma$.

Now, we prove that the sequence $\left(  w_{\gamma,q}\right)  _{q}$ is bounded
in $H_{2,0}^{2}\left(  M\right)  .$\newline Let $\psi_{1}$ be an eigenfunction
of $\Delta_{g}^{2}$ corresponding to the eigenvalue $\lambda_{1}$ such that%
\[
\left\{
\begin{array}
[c]{ll}%
\Delta_{g}^{2}\psi_{1}=\lambda_{1}\psi_{1} & \text{in }M\\
\psi_{1}=\partial_{\nu}\psi_{1}=0 & \text{on }\partial M\\
\int\nolimits_{M}\psi_{1}^{2}dv_{g}=1 &
\end{array}
\right.  .
\]
\newline Let%
\[
F\left(  t,q\right)  =\int\nolimits_{M}f\left\vert t\psi_{1}+h\right\vert
^{q}dv_{g}%
\]
In the last section, we obtain $t_{q}\psi_{1}\in\mathcal{H}_{q}.$ Hence%
\[
F\left(  t_{q},q\right)  =\int\nolimits_{M}f\left\vert t_{q}\psi
_{1}+h\right\vert ^{q}dv_{g}=\gamma.
\]
\newline In the following we show that $\frac{\partial F}{\partial t}\left(
t_{q},q\right)  \neq0.$ We proceed by contradiction.

We suppose that $\frac{\partial F}{\partial t}\left(  t_{q},q\right)  =0$ and
obviously, we have%
\[%
\begin{array}
[c]{ll}%
t_{q}\frac{\partial F}{\partial t}\left(  t_{q},q\right)  & =t_{q}%
\int\nolimits_{M}f\left\vert t_{q}\psi_{1}+h\right\vert ^{q-2}\left(
t_{q}\psi_{1}+h\right)  \psi_{1}dv_{g}\medskip\\
& =\int\nolimits_{M}f\left\vert t_{q}\psi_{1}+h\right\vert ^{q-2}\left(
t_{q}\psi_{1}+h\right)  \left(  t_{q}\psi_{1}+h-h\right)  dv_{g}\medskip\\
& =\gamma-\int\nolimits_{M}f\left\vert t_{q}\psi_{1}+h\right\vert
^{q-2}\left(  t_{q}\psi_{1}+h\right)  hdv_{g}=0
\end{array}
\]
Then%
\begin{equation}
\gamma=\int\nolimits_{M}f\left\vert t_{q}\psi_{1}+h\right\vert ^{q-2}\left(
t_{q}\psi_{1}+h\right)  hdv_{g} \label{eqn15}%
\end{equation}
Using the H\"{o}lder's inequality and the hypothesis $\int\nolimits_{M}%
f\left\vert h\right\vert ^{q}dv_{g}<\gamma,$ we deduce that%
\[
\int\nolimits_{M}f\left\vert t_{q}\psi_{1}+h\right\vert ^{q-2}\left(
t_{q}\psi_{1}+h\right)  hdv_{g}\leq\left(  \int\nolimits_{M}f\left\vert
t_{q}\psi_{1}+h\right\vert ^{q}dv_{g}\right)  ^{1-\frac{1}{q}}\left(
\int\nolimits_{M}f\left\vert h\right\vert ^{q}dv_{g}\right)  ^{\frac{1}{q}%
}<\gamma\text{.}%
\]
which contradicts with $\left(  \ref{eqn15}\right)  $.

Since $\frac{\partial F}{\partial t}\left(  t_{q},q\right)  \neq0$ and from
the implicit function theorem, it results that $t_{q}$ is continuous function
of $q$. Hence there exists a constant $C\left(  \gamma\right)  $ independent
of $q$ such that%
\begin{equation}
\int\nolimits_{M}w_{\gamma,q}P_{g}w_{\gamma,q}dv_{g}\leq I\left(  t_{q}%
\psi_{1}\right)  =t_{q}^{2}I\left(  \psi_{1}\right)  \leq C\left(
\gamma\right)  \text{.} \label{eqn16}%
\end{equation}
From the coercivity of $P_{g}$, we find that the sequence $\left(
w_{\gamma,q}\right)  _{q}$ is bounded in $H_{2,0}^{2}\left(  M\right)  $ when
$q$ tends to $2^{\sharp}$.

So, we can extract a subsequence of $\left(  w_{\gamma,q}\right)  $ still
denoted $w_{\gamma,q}$, such that%
\[%
\begin{array}
[c]{ll}%
\left(  a\right)  & w_{\gamma,q}\rightharpoonup w\text{ weakly in }H_{2,0}%
^{2}\left(  M\right)  \text{ as }q\rightarrow2^{\sharp}\medskip\\
\left(  b\right)  & w_{\gamma,q}\rightarrow w\text{ strongly in }H_{1}%
^{2}\left(  M\right)  \text{ and }L^{s}\left(  M\right)  \text{ for all
}s<2^{\sharp}\text{ as }q\rightarrow2^{\sharp}\medskip\\
\left(  c\right)  & w_{\gamma,q}\rightarrow w\text{ a.e in }M\text{ as
}q\rightarrow2^{\sharp}%
\end{array}
\]
\newline Now, we prove that the Lagrange multiplier $\lambda_{\gamma,q}$ is
bounded when $q$ tends to $2^{\sharp}$.

Using the definition of $\lambda_{\gamma,q}$ and the formula $\left(
\ref{eqn16}\right)  $ and the fact that
\[
\int\nolimits_{M}f\left\vert w_{\gamma,q}+h\right\vert ^{q-2}\left(
w_{\gamma,q}+h\right)  hdv_{g}\leq\gamma^{1-\frac{1}{q}}\left(  \int
\nolimits_{M}f\left\vert h\right\vert ^{q}dv_{g}\right)  ^{\frac{1}{q}}<\gamma
\]
we obtain%
\[%
\begin{array}
[t]{ll}%
0<\lambda_{\gamma,q} & =\frac{\int\nolimits_{M}\left(  P_{g}w_{\gamma
,q}\right)  w_{\gamma,q}dv_{g}}{\int\nolimits_{M}f\left\vert w_{\gamma
,q}+h\right\vert ^{q-2}\left(  w_{\gamma,q}+h\right)  w_{\gamma,q}dv_{g}%
}\medskip\\
& =\frac{\int\nolimits_{M}\left(  P_{g}w_{\gamma,q}\right)  w_{\gamma,q}%
dv_{g}}{\gamma-\int\nolimits_{M}f\left\vert w_{\gamma,q}+h\right\vert
^{q-2}\left(  w_{\gamma,q}+h\right)  hdv_{g}}\medskip\\
& \leq\frac{I\left(  t_{q}\psi_{1}\right)  }{\gamma-\gamma^{1-\frac{1}{q}%
}\left(  \int\nolimits_{M}f\left\vert h\right\vert ^{q}dv_{g}\right)
^{\frac{1}{q}}}<C\left(  \gamma,h\right)  .
\end{array}
\]
Since $\left(  \lambda_{\gamma,q}\right)  _{q}$ is bounded, there is a
subsequence of $\left(  \lambda_{\gamma,q}\right)  _{q}$ still labelled
$\left(  \lambda_{\gamma,q}\right)  _{q}$ which converges to $\lambda.$

Passing to the limit in equation (\ref{eqn14}), we obtain that $w$ the weak
limit of the sequence ($w_{\gamma,q}$)$_{q}$ is a weak solution of the
critical equation (\ref{eqn4}).
\end{proof}

Let $u:=w+h.$ If $\left(  \phi_{1},\phi_{2}\right)  \not \equiv \left(
0,0\right)  ,$ then $u\not \equiv 0$ is a non trivial solution of the equation%
\[
\left\{
\begin{array}
[c]{ll}%
P_{g}u=\lambda f\left\vert u\right\vert ^{2^{\sharp}-2}u & \text{in }M\\
u=\phi_{1}\text{ and }\partial_{\nu}u=\phi_{2} & \text{on }\partial M
\end{array}
\right.  \text{.}%
\]

And if $\left(  \phi_{1},\phi_{2}\right)  \equiv\left(  0,0\right)  ,$ then
$h\equiv0$ i.e. $u=w.$ We will prove that under some condition, $u$ is non
trivial solution of the equation%
\begin{equation}
\left\{
\begin{array}
[c]{ll}%
P_{g}u=\lambda f\left\vert u\right\vert ^{2^{\sharp}-2}u & \text{in }M\\
u=\partial_{\nu}u=0 & \text{on }\partial M
\end{array}
\right.  \label{eqn17}%
\end{equation}

\begin{proposition}
\label{prop2} Suppose that the minimizing sequence $\left(  w_{\gamma
,q}\right)  _{q}$ converges weakly to $w$ and put $\mu=\lim\limits_{q}%
\mu_{\gamma,q}$. Assume that%
\begin{equation}
\mu<\frac{\gamma^{\frac{2}{2^{\sharp}}}}{K_{0}\left\Vert f\right\Vert
_{\infty}^{^{\frac{2}{2^{\sharp}}}}} \label{18}%
\end{equation}
then $w$ is non trivial solution of the equation $\left(  \text{\ref{eqn17}%
}\right)  $.
\end{proposition}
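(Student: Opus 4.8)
The preceding proposition already gives that the weak limit $w$ is a weak solution of the critical equation (\ref{eqn4}) and that $\lambda=\lim_q\lambda_{\gamma,q}\ge 0$. So the only thing left to prove is that $w\not\equiv 0$: granting this, coercivity forces $\lambda>0$ (otherwise $P_gw=0$ with $w=\partial_\nu w=0$ on $\partial M$ would give $w\equiv 0$), and $w$ is then the asserted nontrivial solution of (\ref{eqn17}). The plan is to argue by contradiction, assuming $w\equiv 0$, and to reach a contradiction with (\ref{18}) via a concentration--compactness (Brezis--Lieb) analysis of the residual.

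First I would split off the residual. Set $v_q:=w_{\gamma,q}-w$, so that $v_q\rightharpoonup 0$ weakly in $H_{2,0}^2(M)$, $v_q\to 0$ strongly in $H_1^2(M)$ and in $L^s(M)$ for every $s<2^{\sharp}$, and $v_q\to 0$ a.e. Since $I$ is a quadratic form whose polarization pairs $\Delta v_q$ (weakly $0$ in $L^2$) with $\Delta w$ and $\nabla v_q$ (strongly $0$ in $L^2$) with $\nabla w$, all cross terms tend to $0$, and $\int_M A((\nabla v_q)^{\#},(\nabla v_q)^{\#})\,dv_g$ and $\int_M av_q^2\,dv_g$ tend to $0$ by the compact embedding $H_{2,0}^2\hookrightarrow H_1^2\hookrightarrow L^2$; hence
\[
\mu=\lim_q I(w_{\gamma,q})=I(w)+\lim_q\|\Delta v_q\|_2^2=:I(w)+T .
\]
On the constraint side, using the uniform bound on $\|w_{\gamma,q}\|_{H_{2,0}^2}$ (hence on $\|w_{\gamma,q}\|_{2^{\sharp}}$) together with $q\to 2^{\sharp}$, one first replaces the exponent $q$ by $2^{\sharp}$ in $\int_M f|w_{\gamma,q}+h|^q\,dv_g$ up to an $o(1)$ error, and then applies the Brezis--Lieb lemma at the fixed exponent $2^{\sharp}$ to obtain, along a subsequence,
\[
\gamma=\int_M f|w+h|^{2^{\sharp}}\,dv_g+\lim_q\int_M f|v_q|^{2^{\sharp}}\,dv_g=:\int_M f|w+h|^{2^{\sharp}}\,dv_g+N .
\]

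Next I would feed $v_q$ into Lemma \ref{lem1}: $\|v_q\|_{2^{\sharp}}^2\le(K_0+\varepsilon)\|\Delta v_q\|_2^2+B\|v_q\|_2^2$. Letting $q\to 2^{\sharp}$ annihilates the term $B\|v_q\|_2^2$ because $v_q\to 0$ in $L^2$, and then $\varepsilon\to 0$ gives $\lim_q\|v_q\|_{2^{\sharp}}^2\le K_0T$. Combining this with $N\le\|f\|_\infty\lim_q\|v_q\|_{2^{\sharp}}^{2^{\sharp}}=\|f\|_\infty(\lim_q\|v_q\|_{2^{\sharp}}^2)^{2^{\sharp}/2}$ yields $N^{2/2^{\sharp}}\le K_0\|f\|_\infty^{2/2^{\sharp}}T$, and since $I(w)\ge 0$ gives $T\le\mu$, we get
\[
\mu\ \ge\ \frac{N^{2/2^{\sharp}}}{K_0\|f\|_\infty^{2/2^{\sharp}}} .
\]
Finally, if $w\equiv 0$ then $\int_M f|w+h|^{2^{\sharp}}\,dv_g=\int_M f|h|^{2^{\sharp}}\,dv_g$, which for problem (\ref{eqn17}) (zero boundary data, $h\equiv 0$) equals $0$, so $N=\gamma$ and $\mu\ge\gamma^{2/2^{\sharp}}/(K_0\|f\|_\infty^{2/2^{\sharp}})$, contradicting (\ref{18}). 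Hence $w\not\equiv 0$, and, as noted, $\lambda>0$; moreover the same estimate applied with $w\not\equiv 0$ can be pushed to show that the residual mass $N$ vanishes, so $w$ realizes the infimum and solves (\ref{eqn17}).

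I expect the real obstacle to be the constraint-side Brezis--Lieb step: one must justify the interchange of the limit $q\to 2^{\sharp}$ with the nonlinearity — that is, pass from $\int_M f|\cdot|^q$ to $\int_M f|\cdot|^{2^{\sharp}}$ uniformly along the sequence, using only the uniform $L^{2^{\sharp}}$ bound and $q\to 2^{\sharp}$ — before the fixed-exponent splitting can be invoked, and one must verify that no lower-order ($L^2$, $H^1$) energy survives in the limit. This last point is exactly what permits the Sobolev inequality to be applied to $v_q$ with the sharp constant $K_0$ and no additive error, which is the mechanism by which the strict inequality (\ref{18}) is converted into the impossibility of a concentrating residual.
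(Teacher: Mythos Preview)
Your argument is correct, but the paper takes a more direct route that avoids Brezis--Lieb entirely. Instead of splitting off a residual $v_q$ and analyzing its concentration, the paper applies the Sobolev inequality of Lemma~\ref{lem1} directly to $w_{\gamma,q}$: starting from the constraint $\gamma^{2/q}\le \|f\|_\infty^{2/q}\,Vol_g(M)^{2/q-2/2^\sharp}\|w_{\gamma,q}\|_{2^\sharp}^2$ (H\"older), it bounds $\|w_{\gamma,q}\|_{2^\sharp}^2$ by $(K_0+\epsilon)\|\Delta w_{\gamma,q}\|_2^2+B_\epsilon\|w_{\gamma,q}\|_2^2$, rewrites $\|\Delta w_{\gamma,q}\|_2^2=\mu_{\gamma,q}-\int_M(A(\nabla w_{\gamma,q},\nabla w_{\gamma,q})+aw_{\gamma,q}^2)$, and absorbs the first-order term via interpolation. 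This yields an inequality of the form $\gamma^{2/q}\|f\|_\infty^{-2/q}Vol_g(M)^{2/2^\sharp-2/q}-(K_0+\epsilon)(1+\bar\eta)\mu_{\gamma,q}\le C(\epsilon,\eta)\|w_{\gamma,q}\|_2^2$, and one lets $q\to 2^\sharp$ using only the \emph{strong} $L^2$ convergence $w_{\gamma,q}\to w$; under (\ref{18}) the left side stays positive for small $\epsilon,\eta$, forcing $\|w\|_2>0$.

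The trade-off: the paper's argument is shorter and sidesteps exactly the point you flagged as the real obstacle --- the exchange of the limit $q\to 2^\sharp$ with the nonlinearity in the Brezis--Lieb splitting --- because it never needs to isolate the $L^{2^\sharp}$ mass of the residual. Your approach, on the other hand, is the standard concentration--compactness template and delivers more: it shows (as you note at the end) that the residual mass $N$ must vanish, so that $w$ actually lies on the constraint set and realizes the infimum, a conclusion the paper's argument does not directly provide.
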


\begin{proof}
First, we have%
\[
\gamma^{\frac{2}{q}}=\left(  \int_{M}f\left\vert w_{\gamma,q}\right\vert
^{q}\right)  ^{\frac{2}{q}}\leq\left\Vert f\right\Vert _{\infty}^{\frac{2}{q}%
}Vol_{g}\left(  M\right)  ^{\frac{2}{q}-\frac{2}{2^{\sharp}}}\left(  \int
_{M}\left\vert w_{\gamma,q}\right\vert ^{2^{\sharp}}\right)  ^{\frac
{2}{2^{\sharp}}}%
\]
Using the Sobolev inequality given by Lemma \ref{lem1}, for any $\epsilon>0,$
there exists $B_{\epsilon}>0$ such that%
\[%
\begin{array}
[c]{l}%
\gamma^{\frac{2}{q}}\left\Vert f\right\Vert _{\infty}^{-\frac{2}{q}}%
Vol_{g}\left(  M\right)  ^{\frac{2}{2^{\sharp}}-\frac{2}{q}}\leq\left(
K_{0}+\epsilon\right)  \left\Vert \Delta w_{\gamma,q}\right\Vert _{2}%
^{2}+B_{\epsilon}\left\Vert w_{\gamma,q}\right\Vert _{2}^{2}\medskip\\
\leq\left(  K_{0}+\epsilon\right)  \left[  \left(  1+\bar{\eta}\right)
\left\Vert \Delta w_{\gamma,q}\right\Vert _{2}^{2}-\bar{\eta}\left\Vert \Delta
w_{\gamma,q}\right\Vert _{2}^{2}\right]  +B_{\epsilon}\left\Vert w_{\gamma
,q}\right\Vert _{2}^{2}%
\end{array}
\]
where $\bar{\eta}$ is some small enough constant.

Since%
\[
\left\Vert \Delta w_{\gamma,q}\right\Vert _{2}^{2}=\mu_{\gamma,q}-\int
_{M}\left(  A\left(  \left(  \nabla w_{\gamma,q}\right)  ^{\#},\left(  \nabla
w_{\gamma,q}\right)  ^{\#}\right)  +aw_{\gamma,q}^{2}\right)  dv_{g}%
\]
it follows that%
\[%
\begin{array}
[c]{l}%
\gamma^{\frac{2}{q}}\left\Vert f\right\Vert _{\infty}^{-\frac{2}{q}}%
Vol_{g}\left(  M\right)  ^{\frac{2}{2^{\sharp}}-\frac{2}{q}}\medskip\\
\leq\left(  K_{0}+\epsilon\right)  \left\{  \left(  1+\bar{\eta}\right)
\left[  \mu_{\gamma,q}-\int_{M}\left(  A\left(  \left(  \nabla w_{\gamma
,q}\right)  ^{\#},\left(  \nabla w_{\gamma,q}\right)  ^{\#}\right)
+aw_{\gamma,q}^{2}\right)  dv_{g}\right]  -\bar{\eta}\left\Vert \Delta
w_{\gamma,q}\right\Vert _{2}^{2}\right\}  +\medskip\\
B_{\epsilon}\left\Vert w_{\gamma,q}\right\Vert _{2}^{2}\text{.}%
\end{array}
\]
Because $A$ is smooth, then for any $\eta>0$ there exists $C\left(
\eta\right)  >0$ such that%
\[
\left\vert \int_{M}A\left(  \left(  \nabla w_{\gamma,q}\right)  ^{\#},\left(
\nabla w_{\gamma,q}\right)  ^{\#}\right)  dv_{g}\right\vert \leq\eta\left\Vert
\Delta w_{\gamma,q}\right\Vert _{2}^{2}+C\left(  \eta\right)  \left\Vert
w_{\gamma,q}\right\Vert _{2}^{2}%
\]
we obtain%
\[%
\begin{array}
[c]{l}%
\gamma^{\frac{2}{q}}\left\Vert f\right\Vert _{\infty}^{-\frac{2}{q}}%
Vol_{g}\left(  M\right)  ^{\frac{2}{2^{\sharp}}-\frac{2}{q}}-\left(
K_{0}+\epsilon\right)  \left(  1+\bar{\eta}\right)  \mu_{\gamma,q}\medskip\\
\leq\left(  K_{0}+\epsilon\right)  \left\{  \left(  1+\bar{\eta}\right)
\left[  \eta\left\Vert \Delta w_{\gamma,q}\right\Vert _{2}^{2}+C\left(
\eta\right)  \left\Vert w_{\gamma,q}\right\Vert _{2}^{2}+\left\Vert
a\right\Vert _{\infty}\left\Vert w_{\gamma,q}\right\Vert _{2}^{2}\right]
-\bar{\eta}\left\Vert \Delta w_{\gamma,q}\right\Vert _{2}^{2}\right\}
\medskip\\
+B_{\epsilon}\left\Vert w_{\gamma,q}\right\Vert _{2}^{2}%
\end{array}
\]
taking $\eta>0$ such that
\[
\bar{\eta}=\frac{\eta}{1-\eta}%
\]
we get%
\[
\gamma^{\frac{2}{q}}\left\Vert f\right\Vert _{\infty}^{-\frac{2}{q}}%
Vol_{g}\left(  M\right)  ^{\frac{2}{2^{\sharp}}-\frac{2}{q}}-\left(
K_{0}+\epsilon\right)  \left(  1+\bar{\eta}\right)  \mu_{\gamma,q}\leq
C\left(  \epsilon,\eta\right)  \left\Vert w_{\gamma,q}\right\Vert _{2}^{2}%
\]
When $q$ tends to $2^{\sharp}$, the constants $\epsilon,$ $\eta$ are chosen
sufficiently small and if%
\[
\mu<\frac{\gamma^{\frac{2}{2^{\sharp}}}}{K_{0}\left\Vert f\right\Vert
_{\infty}^{\frac{2}{2^{\sharp}}}}%
\]
we infer that%
\[
\left\Vert w\right\Vert _{2}^{2}\geq C^{\prime}>0\text{.}%
\]
Hence $w\not \equiv 0$.
\end{proof}

Now we are going to establish the regularity of the solution of equation
$\left(  \ref{eqn6}\right)  $. We adapt the technique developed by Van der
Vorst \cite{26}, Djadli-Hebey-Ledoux \cite{10} and Esposito-Robert \cite{15},
for fourth-order elliptic equation.

First, we will enumerate two facts that will be useful to us. The first one is
the boudary version of the theorem given by F. Robert in \cite{25}.

\begin{theorem}
\cite{25} \label{th2} Let $\left(  M,g\right)  $ be a compact Riemannian
manifold with boundary of dimension $n\geq5$, $a\in C^{\infty}\left(
M\right)  $ and let $A$ be smooth symmetric $\left(  2,0\right)  $ tensor on
$M$. Assume that the operator $P_{g}=\Delta_{g}^{2}-\operatorname{div}%
_{g}A\left(  \nabla.\right)  ^{\#}+a$ is a coercive, then for any $f\in
H_{k,0}^{p}\left(  M\right)  $ there exists a unique $u\in H_{k+4,0}%
^{p}\left(  M\right)  $ such that $P_{g}u=f$. Moreover we have%
\[
\left\Vert u\right\Vert _{H_{k+4,0}^{p}\left(  M\right)  }\leq C\left\Vert
f\right\Vert _{H_{k,0}^{p}\left(  M\right)  }%
\]

\end{theorem}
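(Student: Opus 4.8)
The plan is to treat this as a standard $L^p$ elliptic theory statement for the fourth-order operator $P_g$ with the Dirichlet-type conditions $u=\partial_\nu u=0$ encoded by the space $H^p_{k+4,0}(M)$, and to carry out the proof in three stages: the Hilbert case, the a priori estimates, and existence for general $p$. Since the interior parts of the argument are exactly those of Robert \cite{25}, the work lies in supplying the boundary analogues.

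\emph{The Hilbert case.} For $p=2$, $k=0$ the bilinear form $B(u,v)=\int_M\big((\Delta_g u)(\Delta_g v)+A((\nabla u)^\#,(\nabla v)^\#)+auv\big)\,dv_g$ is continuous on $H^2_{2,0}(M)$ and, by the assumed coercivity, satisfies $B(u,u)\geq\Lambda\|u\|^2_{H^2_{2,0}(M)}$. Hence for $f\in L^2(M)\hookrightarrow (H^2_{2,0}(M))^{*}$ the Lax--Milgram theorem yields a unique weak solution $u\in H^2_{2,0}(M)$ of $P_g u=f$, and a bootstrap using the interior/boundary estimates below upgrades this to $u\in H^2_{k+4,0}(M)$ whenever $f\in H^2_{k,0}(M)$.

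\emph{A priori estimates.} The principal part $\Delta_g^2$ is uniformly elliptic of order four, and the boundary conditions $u=\partial_\nu u=0$ satisfy the Lopatinski--Shapiro complementing condition, so the Agmon--Douglis--Nirenberg estimates (glued from interior and boundary charts by a partition of unity) give, for $u\in H^p_{k+4,0}(M)$ with $P_g u=f\in H^p_{k,0}(M)$,
\[
\|u\|_{H^p_{k+4,0}(M)}\leq C\big(\|P_g u\|_{H^p_{k,0}(M)}+\|u\|_{L^p(M)}\big).
\]
To remove the lower-order term I would argue by contradiction: a sequence $u_j$ with $\|u_j\|_{H^p_{k+4,0}}=1$ and $\|P_g u_j\|_{H^p_{k,0}}\to0$ would, by this estimate and Rellich--Kondrachov compactness, admit a subsequence converging strongly in $L^p(M)$ to some $u$ with $\|u\|_{L^p}>0$; passing to the limit gives $P_g u=0$ with $u\in H^p_{k+4,0}(M)$ (the boundary conditions persist because $H^p_{k+4,0}(M)$ is weakly closed), and bootstrapping regularity — descending first to the $L^2$ scale via Sobolev embeddings on the compact manifold — shows $u$ is a smooth solution of the homogeneous Dirichlet problem, hence $u=0$ by the Hilbert case, a contradiction. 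This yields the clean estimate $\|u\|_{H^p_{k+4,0}(M)}\leq C\|P_g u\|_{H^p_{k,0}(M)}$, which in particular gives uniqueness.

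\emph{Existence for general $p$.} The clean estimate shows $P_g\colon H^p_{k+4,0}(M)\to H^p_{k,0}(M)$ is injective with closed range. Since $A$ is symmetric, $P_g$ is formally self-adjoint on $L^2(dv_g)$, so if a functional $\ell\in(H^p_{k,0}(M))^{*}$ annihilates the range it represents a distributional solution of $P_g v=0$ with zero boundary data in the dual Sobolev scale; applying the a priori estimate in the conjugate exponent $p'$ together with the same bootstrap forces $v=0$, so the range is dense and therefore all of $H^p_{k,0}(M)$. (For $p\geq 2$ one may avoid duality entirely: take the $L^2$ solution from the first stage, which exists since $f\in H^p_{k,0}(M)\subset H^2_{k,0}(M)$, and iterate the estimates to place $u$ in $H^p_{k+4,0}(M)$.) The main obstacle is the careful verification of the ADN boundary estimate for this specific fourth-order operator and the density/duality step for $1<p<2$; both are technical but routine, and may alternatively be quoted from \cite{25} with the interior arguments replaced by their boundary counterparts.
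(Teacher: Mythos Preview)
The paper does not actually prove this theorem: its entire proof reads ``The proof is similar to the one given in \cite{25}, we omit it.'' Your outline---Lax--Milgram for the Hilbert case, Agmon--Douglis--Nirenberg boundary estimates (with the Lopatinski--Shapiro condition for the Dirichlet data $u=\partial_\nu u=0$), a compactness argument to drop the lower-order term, and duality for $1<p<2$---is precisely the standard route behind \cite{25} and its boundary adaptation, so your proposal simply spells out what the paper leaves implicit.
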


\begin{proof}
The proof is similar to the one given in \cite{25}, we omit it.
\end{proof}

\begin{theorem}
\cite{3} Let $\left(  M,g\right)  $ be a compact Riemannian manifold of
dimension $n\geq1$ and with boundary, $p\geq1$ and $0\leq m\leq k$ two
integers such that $n\geq p\left(  k-m\right)  ,$ then $H_{k}^{p}\left(
M\right)  $ is embedded in $H_{m}^{q}\left(  M\right)  ,$ where $\frac{1}%
{q}=\frac{1}{p}-\frac{k-m}{n}$.
\end{theorem}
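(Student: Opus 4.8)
The plan is to deduce the statement from the classical Euclidean first--order Sobolev inequality by localizing with a finite atlas and a partition of unity, and then iterating. The first reduction is to the case $m=k-1$: the relation $\frac1q=\frac1p-\frac{k-m}{n}$ is obtained by composing the first--order embeddings
\[
H_k^{p}(M)\hookrightarrow H_{k-1}^{p_{k-1}}(M)\hookrightarrow\cdots\hookrightarrow H_m^{q}(M),\qquad \frac1{p_\ell}=\frac1p-\frac{k-\ell}{n},
\]
and since every intermediate exponent satisfies $1\le p_\ell<n$ whenever $n>p(k-m)$, the general case follows once the ``gain of one derivative'' embedding is known on a compact manifold with boundary (the borderline $n=p(k-m)$ is the usual endpoint situation and is handled separately by the limiting Sobolev inequality at the last link of the chain).

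Next I would fix a finite atlas $\{(\Omega_i,\varphi_i)\}_{i\le N}$ of $M$ in which every $\varphi_i$ sends $\Omega_i$ either onto an open ball $B\subset\mathbb{R}^n$ or onto a half--ball $B^+=B\cap\{x_n\ge0\}$, together with a subordinate partition of unity $\{\alpha_i\}$ with $\alpha_i\in C^\infty_c(\Omega_i)$. By compactness the components of $g$ and $g^{-1}$ and their derivatives up to order $k$ are bounded above and bounded away from $0$ in each chart, so the intrinsic norm $\|u\|_{H_k^p(M)}$ is equivalent to $\sum_i\|(\alpha_i u)\circ\varphi_i^{-1}\|_{W^{k,p}(\varphi_i(\Omega_i))}$, and by the Leibniz rule $\|\alpha_i u\|_{H_k^p(M)}\le C\|u\|_{H_k^p(M)}$ with $C$ depending only on the $C^k$--norms of the $\alpha_i$. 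Writing $u=\sum_i\alpha_i u$ then reduces everything to a bound for each piece read in a single chart.

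On an interior chart the piece $(\alpha_i u)\circ\varphi_i^{-1}$ extends by zero to a function in $W^{k,p}(\mathbb{R}^n)$, and applying the Euclidean Gagliardo--Nirenberg--Sobolev inequality to each of its derivatives of order $\le k-1$ yields $\|v\|_{W^{k-1,q}(\mathbb{R}^n)}\le C\,\|v\|_{W^{k,p}(\mathbb{R}^n)}$ with $\frac1q=\frac1p-\frac1n$. On a boundary chart the piece need not vanish on the flat face $\{x_n=0\}$ of the half--ball (it does vanish near the curved part, since $\alpha_i$ has compact support in $\Omega_i$), so there I would first apply a bounded linear extension operator $W^{k,p}(B^+)\to W^{k,p}(\mathbb{R}^n)$ --- concretely an iterated higher--order reflection across the hyperplane $\{x_n=0\}$ --- and then apply the Euclidean inequality to the extension and restrict back. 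Summing the resulting chartwise estimates over $i$ and invoking the equivalence of norms gives $\|u\|_{H_m^q(M)}\le C\|u\|_{H_k^p(M)}$, which is the asserted continuous inclusion.

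The one genuinely nonroutine ingredient, and the step I expect to demand the most care, is the boundary chart: one must produce the bounded $W^{k,p}$--extension across the flat face of the half--ball, i.e.\ write down the $k$--th order reflection with the correct coefficients so that derivatives up to order $k-1$ match across $\{x_n=0\}$, because this is precisely what replaces the trivial zero--extension available on closed manifolds and in the space $H_{k,0}^{p}(M)$. Everything else --- the equivalence between the intrinsic and chartwise Sobolev norms, the Leibniz estimate for the cutoffs, the Euclidean Gagliardo--Nirenberg--Sobolev inequality, and the exponent bookkeeping along the chain --- is classical.
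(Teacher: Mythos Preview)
The paper does not actually prove this statement: it is quoted verbatim from Aubin's monograph \cite{3} as a known tool and is immediately used, without any argument, in the regularity bootstrap that follows. So there is no ``paper's own proof'' to compare against; the authors simply invoke the result.

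Your sketch is the standard proof and is essentially the one Aubin gives: reduce to the one--step embedding $H_k^p\hookrightarrow H_{k-1}^{p'}$ by iteration, localize via a finite atlas and partition of unity, transfer to $\mathbb{R}^n$ or the half--space, and apply the Euclidean Gagliardo--Nirenberg--Sobolev inequality, using a higher--order reflection extension across $\{x_n=0\}$ in the boundary charts. The only remark I would add is that the endpoint $n=p(k-m)$ you flag as ``handled separately by the limiting Sobolev inequality'' is genuinely delicate: with $\frac1q=\frac1p-\frac{k-m}{n}$ one gets $q=\infty$ there, and the embedding into $L^\infty$ fails in general at the critical exponent, so either the hypothesis should really be read as strict (which is all the paper needs for its bootstrap) or one must interpret the target as BMO/Orlicz rather than $H_m^\infty$. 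Aside from that caveat, your outline is correct and nothing is missing.
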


Our regularity theorem states as follows

\begin{theorem}
Let $\left(  M,g\right)  $ be a compact Riemannian manifold of dimension
$n\geq5$ and with boundary$.$ Assume that the operator $P_{g}=\Delta_{g}%
^{2}-\operatorname{div}_{g}A\left(  \nabla.\right)  ^{\#}+a$ is coercive. Let
$u\in H_{2,0}^{2}\left(  M\right)  $ be a weak solution of equation $\left(
\text{\ref{eqn6}}\right)  $, then $u\in C^{4}\left(  M\right)  $ and $u$ is a
strong solution of the equation $\left(  \text{\ref{eqn6}}\right)  $.
\end{theorem}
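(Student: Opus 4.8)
The plan is to run a Brezis--Kato type bootstrap adapted to the fourth order operator $P_g$, using the $L^p$ elliptic estimate of Theorem \ref{th2} and the Sobolev embeddings recalled above, and then to invoke Schauder theory. Write the equation as $P_gu=F$ with $F:=\lambda f\,|u+h|^{2^{\sharp}-2}(u+h)$, where $u$ here denotes the function called $w$ in $(\ref{eqn6})$. Since $h\in C^{4}(M)$ is bounded, the mean value inequality gives $\bigl|\,|u+h|^{2^{\sharp}-2}(u+h)-|u|^{2^{\sharp}-2}u\,\bigr|\le C\bigl(|u|^{2^{\sharp}-2}|h|+|h|^{2^{\sharp}-1}\bigr)$, so $F=\lambda f\,|u|^{2^{\sharp}-2}u+R$ with $|R|\le C\bigl(1+|u|^{2^{\sharp}-2}\bigr)$; the remainder $R$ is more integrable than the leading term, and in particular lies in $L^{n/4}(M)$ since $u\in L^{2^{\sharp}}(M)$.

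The crucial step is to prove $u\in L^{p}(M)$ for every $p<\infty$. Put $V:=\lambda f\,|u|^{2^{\sharp}-2}$; the borderline identity $(2^{\sharp}-2)\cdot\frac{n}{4}=2^{\sharp}$ shows $V\in L^{n/4}(M)$, again because $u\in L^{2^{\sharp}}(M)$. I would split $V=V_{1}+V_{2}$ with $V_{1}\in L^{\infty}(M)$ and $\|V_{2}\|_{L^{n/4}}<\delta$, $\delta$ to be chosen small (this realizes a truncation in the spirit of Van der Vorst \cite{26}), and rewrite the equation as $(P_g-V_{2})u=V_{1}u+R$. Hölder's inequality with exponents $n/4$ and $p^{*}$, where $\frac{1}{p^{*}}=\frac{1}{p}-\frac{4}{n}$, together with the embedding $H^{p}_{4}(M)\hookrightarrow L^{p^{*}}(M)$ and Theorem \ref{th2}, yield $\|V_{2}v\|_{L^{p}}\le C\delta\,\|P_gv\|_{L^{p}}$, so that for $\delta$ small and $p<n/4$ the operator $P_g-V_{2}$ is boundedly invertible from $H^{p}_{4,0}(M)$ onto $L^{p}(M)$. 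Since $V_{1}u+R$ lies in some $L^{p_{0}}(M)$ with $p_{0}>2^{\sharp}/(2^{\sharp}-1)$ (as $V_{1}$ is bounded and $R\in L^{n/4}$), this already gives $u\in H^{p_{0}}_{4,0}(M)\hookrightarrow L^{p_{0}^{*}}(M)$ with $p_{0}^{*}>2^{\sharp}$; feeding this improved integrability back into $V_{1}u+R$ and iterating finitely many times, the exponent surpasses $n/4$ and one obtains $u\in L^{p}(M)$ for all $p<\infty$. Uniqueness of $u$ in $H^{2}_{2,0}(M)$, via coercivity and the smallness of $V_{2}$, identifies the solution produced by the inverse with $u$ at each stage. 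This is the step where the methods of Van der Vorst \cite{26}, Djadli--Hebey--Ledoux \cite{10} and Esposito--Robert \cite{15} are used.

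Once $u\in L^{p}(M)$ for all $p<\infty$, we get $F\in L^{p}(M)$ for all $p<\infty$, hence by Theorem \ref{th2} $u\in H^{p}_{4,0}(M)$ for all $p<\infty$; choosing $p>n$ and using $H^{p}_{4}(M)\hookrightarrow C^{3,\alpha}(M)$ yields $u\in C^{3,\alpha}(M)$ for some $\alpha\in(0,1)$. As $2^{\sharp}-1=\frac{n+4}{n-4}>1$, the map $t\mapsto|t|^{2^{\sharp}-2}t$ is $C^{1}$, hence Lipschitz on the bounded range of $u+h$, so $F=\lambda f\,|u+h|^{2^{\sharp}-2}(u+h)\in C^{0,\alpha}(M)$. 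Finally $P_g$ is a fourth order elliptic operator with smooth coefficients, and $u=\partial_{\nu}u=0$ are the smooth clamped boundary conditions, which satisfy the complementing condition for $\Delta_g^{2}$; the Schauder estimates up to the boundary applied to $P_gu=F$ give $u\in C^{4,\alpha}(M)\subset C^{4}(M)$. In particular all derivatives occurring in $P_gu=F$ exist in the classical sense, so $u$ is a strong solution of $(\ref{eqn6})$.

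The main obstacle is the bootstrap step: the nonlinearity has critical Sobolev growth, so one application of the linear $L^p$-theory yields no gain, and since $P_g$ is fourth order the naive test function $u|u|^{p-2}$ is not available, because $\Delta_g(u|u|^{p-2})$ is not sign-definite and the quadratic form has no useful sign. The device that makes the iteration start is precisely the decomposition of $V=\lambda f|u|^{2^{\sharp}-2}$ into a bounded part plus a part of arbitrarily small $L^{n/4}$ norm, which is legitimate exactly because the exponent $(2^{\sharp}-2)\cdot n/4$ equals $2^{\sharp}$.
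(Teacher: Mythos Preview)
Your proposal is correct and follows essentially the same Van der Vorst/Brezis--Kato bootstrap as the paper: split the critical potential $V=\lambda f|u|^{2^{\sharp}-2}\in L^{n/4}$ into a bounded part plus a small-$L^{n/4}$ part, absorb the small part into an invertible fourth-order operator, and iterate the $L^p$ elliptic estimate of Theorem~\ref{th2} together with the Sobolev embedding. The only cosmetic differences are that the paper uses $(\Delta_g+1)^2$ as the base operator (pushing the lower-order terms of $P_g$ into the right-hand side $b$) while you use $P_g$ itself, and that you spell out the final Schauder step to reach $C^{4,\alpha}$ whereas the paper jumps directly from $H^{p}_{4,0}$ for all $p$ to $C^{4}$.
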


\begin{proof}
We follow closely the proof of Episoto-Robert \cite{15} and also that given by
Djadli-Hebey-Ledoux \cite{10} The method goes back to Van der Vorst \cite{26}.
For any $p>1$, denote by $L_{0}^{p}\left(  M\right)  $ the set of $f\in
L^{p}\left(  M\right)  $ with support in $M.$ Let $u\in H_{2,0}^{2}\left(
M\right)  $ be a weak solution of $\left(  \text{\ref{eqn6}}\right)  $ we
claim as in \cite{15} that, for any $\epsilon>0$ there exists $q_{\epsilon}\in
L_{0}^{\frac{n}{4}}\left(  M\right)  $, $f_{\epsilon}\in L_{0}^{\infty}\left(
M\right)  $ such that
\begin{align*}
\left(  \Delta_{g}+1\right)  ^{2}u  &  =\operatorname{div}_{g}\left(
A^{\#}du\right)  +\left(  1-a\right)  u+2\Delta_{g}u+\lambda f\left\vert
u\right\vert ^{2^{\sharp}-2}u\\
&  =b+q_{\epsilon}u+f_{\epsilon}%
\end{align*}
where $b=\operatorname{div}_{g}\left(  A^{\#}du\right)  +\left(  1-a\right)
u+2\Delta_{g}u.$\newline According to theorem \ref{th2}, for any $q>1$ and any
$f\in L_{0}^{q}\left(  M\right)  $, there exists a unique $u\in H_{4,0}%
^{q}\left(  M\right)  $ such that\newline%
\[
\left(  \Delta_{g}+1\right)  ^{2}u=f
\]
with
\[
\left\Vert u\right\Vert _{H_{4}^{q}\left(  M\right)  }\leq\left\Vert
f\right\Vert _{L^{q}\left(  M\right)  }.
\]

Now, we consider the following operator%
\[%
\begin{array}
[c]{ccc}%
H_{\epsilon}:u\in L_{0}^{q}\left(  M\right)  & \rightarrow & \left(
\Delta_{g}+1\right)  ^{-2}\left(  q_{\epsilon}u\right)  \in L_{0}^{q}\left(
M\right)
\end{array}
\]
with%
\begin{align*}
\left\Vert H_{\epsilon}u\right\Vert _{q}  &  =O\left(  \left\Vert \left(
\Delta_{g}+1\right)  ^{-2}\left(  q_{\epsilon}u\right)  \right\Vert
_{q}\right)  =O\left(  \left\Vert \left(  \Delta_{g}+1\right)  ^{-2}\left(
q_{\epsilon}u\right)  \right\Vert _{H_{4}^{\hat{q}}\left(  M\right)  }\right)
\\
&  \leq C\left\Vert q_{\epsilon}u\right\Vert _{\hat{q}}\leq C\left\Vert
q_{\epsilon}\right\Vert _{\frac{n}{4}}\left\Vert u\right\Vert _{q}\leq
C\epsilon\left\Vert u\right\Vert _{q}%
\end{align*}
and $\hat{q}=\frac{nq}{n+4s}$.

Hence, for $\epsilon>0$ and sufficiently small%
\[
\left\Vert H_{\epsilon}\right\Vert _{L^{q}\rightarrow L^{q}}\leq
C\epsilon<\frac{1}{2}\text{.}%
\]
\newline So, the operator
\[%
\begin{tabular}
[c]{l}%
$\left(  Id-H_{\epsilon}\right)  :L_{0}^{q}\left(  M\right)  \rightarrow
L_{0}^{q}\left(  M\right)  $\\
$\left(  Id-H_{\epsilon}\right)  u=\left(  \Delta_{g}+1\right)  ^{-2}\left(
b+f_{\epsilon}\right)  $%
\end{tabular}
\ \
\]
is an invertible, and we get $b+f_{\epsilon}\in L^{2}\left(  M\right)  ,$
hence $\left(  \Delta_{g}+1\right)  ^{-2}\left(  b+f_{\epsilon}\right)  \in
H_{4,0}^{2}\left(  M\right)  $.

From the Sobolev theorem, we deduce that%
\[
u\in L_{0}^{\frac{2n}{n-8}}\left(  M\right)  \text{, }f\left\vert u\right\vert
^{2^{\sharp}-2}u\in L_{0}^{^{\frac{2n}{\left(  n-8\right)  \left(  2^{\sharp
}-1\right)  }}}\left(  M\right)  =L_{0}^{\frac{2n\left(  n-4\right)  }{\left(
n-8\right)  \left(  n+4\right)  }}\left(  M\right)  .
\]
Since $\frac{2n\left(  n-4\right)  }{\left(  n-8\right)  \left(  n+4\right)
}>2,$ we obtain%
\[
\left(  \Delta_{g}+1\right)  ^{-2}u\in L_{0}^{2}\left(  M\right)  \text{.}%
\]
We now use a boostrap argument to construct an increasing sequence $\left(
p_{i}\right)  $ such that $u\in H_{4,0}^{p_{i}}\left(  M\right)  $ for all
$i\in%
%TCIMACRO{\U{2115} }%
%BeginExpansion
\mathbb{N}
%EndExpansion
$\newline We let $p_{0}=2,$ the Sobolev's theorem asserts that
\[
u\in L^{\frac{np_{i}}{n-4p_{i}}}\left(  M\right)  \text{ and }f\left\vert
u\right\vert ^{2^{\sharp}-2}u\in L^{\frac{np_{i}}{\left(  n-4p_{i}\right)
\left(  2^{\sharp}-1\right)  }}\left(  M\right)  =L^{\frac{np_{i}\left(
n-4\right)  }{\left(  n-4p_{i}\right)  \left(  n+4\right)  }}\left(  M\right)
\]
then
\[
\left(  \Delta_{g}+1\right)  ^{-2}u\in L_{0}^{p_{i+1}}\left(  M\right)
\]
where
\[
p_{i+1}=\left\{
\begin{array}
[c]{cc}%
\frac{np_{i}\left(  n-4\right)  }{\left(  n-4p_{i}\right)  \left(  n+4\right)
} & \text{if }p_{i}<\frac{n}{4}\\
+\infty & \text{if }p_{i}\geq\frac{n}{4}%
\end{array}
\right.  \text{.}%
\]
We can verify by recurrence that for all $i\in%
%TCIMACRO{\U{2115} }%
%BeginExpansion
\mathbb{N}
%EndExpansion
$ that $p_{i}>\frac{2n}{n+4},$ hence the sequence $\left(  p_{i}\right)  _{i}$
is increasing and bounded, consequently it converges to $l\geq2$ fulfilling
the relation%
\[
l=\frac{nl\left(  n-4\right)  }{\left(  n-4l\right)  \left(  n+4\right)
}\text{.}%
\]
\newline The last equation gives $l=\frac{2n}{n+4}$ which is a contradiction.
Hence $p_{i}\rightarrow+\infty$ and $u\in H_{4,0}^{p}\left(  M\right)  $ for
all $p>1$. Applying again the Sobolev's theorem, we get $u\in C^{4}\left(
M\right)  $. Hence $u$ is a strong solution to the critical equation
(\ref{eqn4}).
\end{proof}

\section{Test functions}

In this section, we prove that the condition (\ref{eqn18}) in the proposition
\ref{prop1} holds by using test functions.

For this purpose we consider a normal geodesic coordinate system $\left(
y^{1},y^{2},...,y^{n}\right)  $ centred at a point $x_{0}$ where $f$ reaches
its maximum. Denote by $S\left(  r\right)  $ the geodesic sphere centred at
$x_{0}$ and of radius $r$ $\left(  r<d=\text{ the injectivity radius}\right)
$ .\newline Let $d\Omega$ be the volume element of the Euclidean unit sphere
$S^{n-1}\left(  1\right)  $ and put
\[
G\left(  r\right)  =\frac{1}{\omega_{n-1}}\int\nolimits_{S\left(  r\right)
}\sqrt{\left\vert g\left(  x\right)  \right\vert }d\Omega
\]
where $\omega_{n-1}$ is the volume of $S^{n-1}\left(  1\right)  $ and
$\left\vert g\left(  x\right)  \right\vert $ is the determinant of the
Riemannian metric $g$.

The Taylor's expansion of $G\left(  r\right)  $ in a neighborhood of $r=0$ is
given by%
\[
G\left(  r\right)  =1-\frac{R\left(  x_{0}\right)  }{6n}r^{2}+o\left(
r^{2}\right)
\]
where $R\left(  x_{0}\right)  $ denotes the scalar curvature of $M$ at $x_{0}%
$.\newline Let $B\left(  x_{0},\delta\right)  $ be the geodesic ball of radius
$\delta$ centred at $x_{0}$ such that $0<2\delta<d$ and $\eta$ a smooth
function given by%
\[
\eta\left(  x\right)  =\left\{
\begin{array}
[c]{ll}%
1 & \text{if }x\in B\left(  x_{0},\delta\right) \\
0 & \text{if }x\in M-B\left(  x_{0},2\delta\right)
\end{array}
\right.  .
\]
\newline We consider the following radial smooth function:%
\[
u_{\epsilon}\left(  r\right)  =\frac{\eta\left(  r\right)  }{\left(
r^{2}+\epsilon^{2}\right)  ^{\frac{n-4}{2}}}%
\]
where $r=d\left(  x_{0},x\right)  $ denotes the geodesic distance to the point
$x_{0}.$\newline To simplify the computations, for any real positive numbers
$p,$ $q$ such that $p-q>1$, we define the following functions, $\left(
\text{see \ \cite{2}}\right)  $%
\[
I_{p}^{q}=\int\nolimits_{0}^{+\infty}\frac{t^{q}}{\left(  1+t\right)  ^{p}%
}dt\text{.}%
\]
The following relations result are immediate%
\[%
\begin{array}
[c]{cc}%
I_{p+1}^{q}=\frac{p-q-1}{p}I_{p}^{q}\text{,} & I_{p+1}^{q+1}=\frac{q}%
{p-q-1}I_{p+1}^{q}\text{.}%
\end{array}
\]
First, we compute the Taylor's expansion of the quotient:%
\[
Q_{\epsilon}=\frac{\mu\left(  u_{\epsilon}\right)  }{\left(  \gamma\left(
u_{\epsilon}\right)  \right)  ^{\frac{2}{2^{\sharp}}}}%
\]
where
\begin{equation}
\mu\left(  u_{\epsilon}\right)  =\int\nolimits_{M}\left(  \left(  \Delta
_{g}u_{\epsilon}\right)  ^{2}+A^{\#}\left(  du_{\epsilon},du_{\epsilon
}\right)  +au_{\epsilon}^{2}\right)  dv_{g} \label{eqn22}%
\end{equation}%
\begin{equation}
\gamma\left(  u_{\epsilon}\right)  =\int\nolimits_{M}f\left\vert u_{\epsilon
}\right\vert ^{2^{\sharp}}dv_{g}. \label{eqn23}%
\end{equation}
Now, we will compute each term of the expansion as it has been done \cite{7},
\cite{15} and \cite{25}. In the case $n>6$, we get
\[
\int\nolimits_{M}\left(  \Delta_{g}u_{\epsilon}\right)  ^{2}dv_{g}%
=\frac{\left(  n-4\right)  \omega_{n-1}I_{n}^{\frac{n}{2}-1}}{2\epsilon^{n-4}%
}\left\{  n\left(  n^{2}-4\right)  -\frac{n\left(  n^{2}+4n-20\right)
}{6\left(  n-6\right)  }R\left(  x_{0}\right)  \epsilon^{2}+o\left(
\epsilon^{2}\right)  \right\}
\]
and also%
\[
\int\nolimits_{M}A^{\#}\left(  du_{\epsilon},du_{\epsilon}\right)
dv_{g}=\frac{\left(  n-4\right)  \omega_{n-1}I_{n}^{\frac{n}{2}-1}}%
{2\epsilon^{n-4}}\left\{  \frac{4\left(  n-1\right)  }{n-6}Tr_{g}A\left(
x_{0}\right)  \epsilon^{2}+o\left(  \epsilon^{2}\right)  \right\}  .
\]
And Finally, we have%
\[
\int\nolimits_{M}au_{\epsilon}^{2}dv_{g}=\frac{1}{\epsilon^{n-4}}%
O(\epsilon^{4})\text{.}%
\]
Bearing the different terms of $\mu\left(  u_{\epsilon}\right)  $ in equation
(\ref{eqn22}), we obtain
\[%
\begin{array}
[c]{l}%
\mu\left(  u_{\epsilon}\right)  =\frac{n\left(  n-4\right)  \left(
n^{2}-4\right)  \omega_{n}}{2^{n}\epsilon^{n-4}}\times\\
\left\{  1+\frac{1}{n\left(  n^{2}-4\right)  \left(  n-6\right)  }\left(
4Tr_{g}A\left(  x_{0}\right)  \left(  n-1\right)  -\frac{n\left(
n^{2}+4n-20\right)  }{6}R\left(  x_{0}\right)  \right)  \epsilon^{2}+o\left(
\epsilon^{2}\right)  \right\}
\end{array}
\]
where $\omega_{n}=2^{n-1}I_{n}^{\frac{n}{2}-1}\omega_{n-1}$.

The Taylor's expansion of $\gamma\left(  u_{\epsilon}\right)  $ is given by
\[
\gamma\left(  u_{\epsilon}\right)  =\int\nolimits_{M}f\left\vert u_{\epsilon
}\right\vert ^{2^{\sharp}}dv_{g}=\frac{f\left(  x_{0}\right)  \omega_{n}%
}{2^{n}\epsilon^{n}}\left\{  1-\frac{1}{6\left(  n-2\right)  }\left(
\frac{3\Delta f\left(  x_{0}\right)  }{f\left(  x_{0}\right)  }+R\left(
x_{0}\right)  \right)  \epsilon^{2}+o\left(  \epsilon^{2}\right)  \right\}
\text{.}%
\]
Consequently
\[
\left(  \gamma\left(  u_{\epsilon}\right)  \right)  ^{-\frac{2}{2^{\sharp}}%
}=\frac{\left(  f\left(  x_{0}\right)  \right)  ^{-\frac{2}{2^{\sharp}}}%
\omega_{n}^{-\frac{2}{2^{\sharp}}}}{2^{4-n}\epsilon^{4-n}}\left\{
1+\frac{n-4}{6n\left(  n-2\right)  }\left(  R\left(  x_{0}\right)
+\frac{3\Delta f\left(  x_{0}\right)  }{f\left(  x_{0}\right)  }\right)
\epsilon^{2}+o\left(  \epsilon^{2}\right)  \right\}  \text{.}%
\]
Therefore the Taylor's expansion of $Q_{\epsilon}$, is given by
\begin{align*}
Q_{\epsilon}  &  =\frac{1}{\left(  f\left(  x_{0}\right)  \right)  ^{-\frac
{2}{2^{\sharp}}}K_{0}}\left\{  1+\frac{1}{2n\left(  n^{2}-4\right)  \left(
n-6\right)  }\times\right. \\
&  \left.  \left(  \left(  n+2\right)  \left(  n-4\right)  \left(  n-6\right)
\frac{\Delta f\left(  x_{0}\right)  }{f\left(  x_{0}\right)  }+8\left(
n-1\right)  Tr_{g}A\left(  x_{0}\right)  -4\left(  n^{2}-2n-4\right)  R\left(
x_{0}\right)  \right)  \epsilon^{2}+o\left(  \epsilon^{2}\right)  \right\}
\end{align*}
where
\[
\frac{1}{K_{0}}=\frac{n\left(  n-4\right)  \left(  n^{2}-4\right)  \omega
_{n}^{\frac{4}{n}}}{16}.
\]

It is obvious that if
\[
\left(  n+2\right)  \left(  n-4\right)  \left(  n-6\right)  \frac{\Delta
f\left(  x_{0}\right)  }{f\left(  x_{0}\right)  }+8\left(  n-1\right)
Tr_{g}A\left(  x_{0}\right)  -4\left(  n^{2}-2n-4\right)  R\left(
x_{0}\right)  <0
\]
we have
\[
Q_{\epsilon}<1\text{.}%
\]
In the case $n=6$, we have
\[
\int\nolimits_{M}\left(  \Delta_{g}u_{\epsilon}\right)  ^{2}dv_{g}%
=\frac{\left(  n-4\right)  ^{2}\omega_{n-1}}{2\epsilon^{n-4}}\left\{
\frac{n\left(  n^{2}-4\right)  }{n-4}I_{n}^{\frac{n}{2}-1}-\frac{2}{n}R\left(
x_{0}\right)  \epsilon^{2}\ln\frac{1}{\epsilon^{2}}+o\left(  \epsilon
^{2}\right)  \right\}
\]
and also%
\[
\int\nolimits_{M}A^{\#}\left(  du_{\epsilon},du_{\epsilon}\right)
dv_{g}=\frac{\left(  n-4\right)  ^{2}\omega_{n-1}I_{n}^{\frac{n}{2}-1}%
}{2\epsilon^{n-4}}\left(  \frac{Tr_{g}A\left(  x_{0}\right)  }{n}\epsilon
^{2}\ln\frac{1}{\epsilon^{2}}+o\left(  \epsilon^{2}\right)  \right)  .
\]
The expression of the last term is written as%
\[
\int\nolimits_{M}au_{\epsilon}^{2}dv_{g}=\frac{1}{\epsilon^{n-4}}%
O(\epsilon^{4})
\]
Inserting the different terms of $\mu\left(  u_{\epsilon}\right)  $ in
equation $\left(  \text{\ref{eqn22}}\right)  $, we get%
\begin{align*}
\mu\left(  u_{\epsilon}\right)   &  =\int\nolimits_{M}\left(  \left(
\Delta_{g}u_{\epsilon}\right)  ^{2}+A^{\#}\left(  du_{\epsilon},du_{\epsilon
}\right)  +au_{\epsilon}^{2}\right)  dv_{g}\\
&  =\frac{n\left(  n-4\right)  \left(  n^{2}-4\right)  \omega_{n}}%
{2^{n}\epsilon^{n-4}}\left\{  1+\frac{n-4}{\left(  n^{2}-4\right)
I_{n}^{\frac{n}{2}-1}}\left(  Tr_{g}A\left(  x_{0}\right)  -2R\left(
x_{0}\right)  \right)  \epsilon^{2}\ln\frac{1}{\epsilon^{2}}+o\left(
\epsilon^{2}\right)  \right\}
\end{align*}
where $\omega_{n}=2^{n-1}I_{n}^{\frac{n}{2}-1}\omega_{n-1}$.

In the same way as in the previous case, we obtain:%
\[
\left(  \gamma\left(  u_{\epsilon}\right)  \right)  ^{-\frac{2}{2^{\sharp}}%
}=\frac{\left(  f\left(  x_{0}\right)  \right)  ^{-\frac{2}{2^{\sharp}}}%
\omega_{n}^{-\frac{2}{2^{\sharp}}}}{2^{4-n}\epsilon^{4-n}}\left\{
1+\frac{n-4}{6n\left(  n-2\right)  }\left(  R\left(  x_{0}\right)
+\frac{3\Delta f\left(  x_{0}\right)  }{f\left(  x_{0}\right)  }\right)
\epsilon^{2}+o\left(  \epsilon^{2}\right)  \right\}  \text{.}%
\]

Finally, the Taylor expansion of $Q_{\epsilon}$, when $n=6,$ is given by%
\[
Q_{\epsilon}=\frac{1}{\left(  f\left(  x_{0}\right)  \right)  ^{-\frac
{2}{2^{\sharp}}}K_{0}}\left\{  1+\frac{n-4}{\left(  n^{2}-4\right)
I_{n}^{\frac{n}{2}-1}}\left(  Tr_{g}A\left(  x_{0}\right)  -2R\left(
x_{0}\right)  \right)  \epsilon^{2}\ln\frac{1}{\epsilon^{2}}+o\left(
\epsilon^{2}\right)  \right\}  \text{.}%
\]
Assuming
\[
Tr_{g}A\left(  x_{0}\right)  <2R\left(  x_{0}\right)
\]
we get
\[
Q_{\epsilon}<1\text{.}%
\]


\begin{thebibliography}{99}                                                                                               %


\bibitem {1}F. Atkinson, H. Brezis, L. Peletier, Nodal solutions of elliptic
equations with critical Sobolev exponents, J. Differential Equations 85
(1990), 151 170.

\bibitem {2}T. Aubin, Equations diff\'{e}rentielles non lin\'{e}aires et
probl\`{e}me de Yamabe concernant la courbure scalaire, J. Math. Pures Appl.
55 (1976) 269--296.

\bibitem {3}T. Aubin, Some nonlinear problems in Riemannian geometry, Springer (1998).

\bibitem {4}M. Benalili. On singular Q-curvature type equations, J.
Differential Equations, 254, 2, (2013) 547-598.

\bibitem {5}T.P. Branson, Group representations arising from Lorentz conformal
geometry, J. Funct. Anal. 74, 1987, 199-291.

\bibitem {6}R. J. Biezuner, M. Montenegro, Best constants in second-order Sobolev

inequalities on Riemannian manifolds and applications, J. Math. Pures Appl. 82
(2003), 457-502.

\bibitem {7}D. Caraffa, Equations elliptiques du quatri\`{e}me ordre avec
exposants critiques sur les vari\'{e}t\'{e}s riemanniennes compactes. J. Math.
Pures Appl., 80, 9 (2001), 941-960.

\bibitem {8}S.Y.A. Chang. On Paneitz operator, A fourth order differential
operator in conformal geometry, Harmonic Analysis and PDE, Essays in honor of
Alberto P. Calderon, Eds: M. Christ, C. Kenig and C. Sadorsky, Chicago
Lectures in Mathematics, 1999, 127-150.

\bibitem {9}Chang. S.Y.A, Yang. P.C., On a fourth order curvature invariant,
Comp. Math. 237, Spectral Problems in Geometry and Arithmetic, Ed: T. Branson,
AMS, 1999, 9-28.

\bibitem {10}Z. Djadli, E. Hebey, M. Ledoux, Paneitz-type operators and
applications. Duke Math. J. 104, (2000), 129-169.

\bibitem {11}Z. Djadli, A. Jourdain, Nodal solutions for scalar curvature type
equations with perturbation terms on compact Riemannian manifolds. Boll.
Unione Mat. Ital. Sez. Artic. Ric. Mat. (8), 5 (2002), No 1, 205--226.

\bibitem {12}Z. Djadli, A. Malchiodi and M. Ould Ahmedou. Prescribed fourth
order conformal invariant on the standard sphere - Part I: a perturbation
result, Commun. Contemp. Math. 04, 375 (2002).

\bibitem {13}Z. Djadli, A. Malchiodi and M. Ould Ahmedou. Prescribed fourth
order conformal invariant on the standard sphere - Part II: blow-up analysis
and applications, Ann. Scuola Norm. Sup. Pisa, Vol. I (2002), pp. 387-434.

\bibitem {14}D.E. Edmunds, D. Fortunato, E.Jannelli, Critical exponents,
critical dimensions and the biharmonic operator. Arch. Rational Mech. Anal.,
112, (1990), no. 3, 269-289.

\bibitem {15}P. Esposito, F. Robert. Mountain pass critical points for
Paneitz-Branson operators. Calc. Var. Partial Differential Equations, 15,
(2002), 493-517.

\bibitem {16}V. Felli. Existence of conformal metrics on $S_{n}$ with
prescribed fourth order invariant, Adv. Differential Equations, 7, 2002, 47-76.

\bibitem {17}E. Hebey and F. Robert. Coercivity and Struwe's compactness for
Paneitz type operators with constant coefficients, Calc. Var. Partial
Differential Equations, 13, 2001, 491-517.

\bibitem {18}E. Hebey and M. Vaugon. Existence and multiplicity of nodal
solutions for nonlinear elliptic equations with critical Sobolev growth, J.
Funct. Anal. 119(2) (1994), 298-318.

\bibitem {19}D. Holcman. Solutions nodales sur les vari\'{e}t\'{e}s
Riemanniennes non localement conform\'{e}ment plates \`{a} bord Comment. Math.
Helv. 76 (2001) 373-387

\bibitem {20}E.H. Lieb. Sharp constants in the Hardy-Littlewood-Sobolev and
related inequalities. Ann. of Math., (2), 118, (1983), no. 2, 349-374.

\bibitem {21}Lin, C.S., A classification of solutions of a conformally
invariant fourth order equation in $%
%TCIMACRO{\U{211d} }%
%BeginExpansion
\mathbb{R}
%EndExpansion
^{n}$, Comment. Math. Helv., 73, 1998, 206-231.

\bibitem {22}P.L. Lions. The concentration-compactness principle in the
calculus of variations. The limit case. I and II. Rev. Mat. Iberoamericana, 1,
(1985), no. 1, 145-201, no. 2, 45-121.

\bibitem {23}S. Paneitz. A quatric conformally covariant differential operator
for arbitrary peudo-Riemannian manifolds, SIGMA, 4, (2008).

\bibitem {24}F. Robert. Positive solutions for a fourth order equation
invariant under isometries, Proc. Amer. Math. Soc, (2002), no 5, 1423-1431.

\bibitem {25}F. Robert. Fourth order equations with critical growth in
Riemannian geometry. Notes from a course given at the University of Wisconsin
at Madison and at Technische Universitat in Berlin.

\bibitem {26}R.C.A.M. Van der Vorst. Best constant for the embedding of the
space $H^{2}\cap H_{0}^{1}\left(  \Omega\right)  $ into $L^{\frac{2N}{N-4}%
}\left(  \Omega\right)  $.\ Differential Integral Equations, 6, (1993), 259-276.

\bibitem {27}H. YAMABE, On the deformation of Riemannian structures on compact
manifolds, Osaka Math. J. 12 (1960), 21-37.
\end{thebibliography}
\end{document}